\definecolor{red}{rgb}{1,0,0}
\definecolor{blue}{rgb}{.2,.2,.8}
\newtheorem{theorem}{Theorem}[section]
\newtheorem{corollary}[theorem]{Corollary}
\theoremstyle{definition}
\begin{document}

\title{Rank partition functions\\ and truncated theta identities}
\author{Mircea Merca\\
	\footnotesize Department of Mathematics, University of Craiova, 200585 Craiova, Romania\\
	\footnotesize Academy of Romanian Scientists, Ilfov 3, Sector 5, Bucharest, Romania\\
	\footnotesize mircea.merca@profinfo.edu.ro
}
\date{}
\maketitle

\begin{abstract} 
In $1944$, Freeman Dyson defined the concept of rank of an integer partition and introduced without definition the term of crank of an integer partition.
A definition for the crank satisfying the properties hypothesized for it by Dyson was discovered in 1988 by G. E. Andrews and F. G. Garvan.
In this paper, we introduce truncated forms for two theta identities involving the generating functions for partitions with non-negative rank and non-negative crank. As corollaries we derive new infinite families of linear inequalities for the partition function $p(n)$.
The number of Garden of Eden partitions are also considered in this context in order to provide other infinite families of linear inequalities for $p(n)$.
\\ 
\\
{\bf Keywords:}  partitions, Dyson's rank, theta series, inequalities
\\
\\
{\bf MSC 2010:}   05A17, 11P81, 11P83 
\end{abstract}

\section{Introduction}

A partition of a positive integer $n$ is any non-increasing sequence of positive
integers whose sum is $n$ \cite{Andrews76}. Let $p(n)$ denote the number of partitions of $n$ with the usual convention that
$p(0)=1$ and $p(n)=0$ when $n$ is not a non-negative integer. 
Ramanujan proved that for every positive integer $n$, we have:
\begin{align*}
p(5n+4) &\equiv 0 \pmod 5\\
p(7n+5) &\equiv 0 \pmod 7\\
p(11n+6) &\equiv 0 \pmod {11}.
\end{align*}
In order to explain the last two congruences combinatorially, Dyson \cite{Dyson}
introduced the rank of a partition. The rank of a partition is defined to be its
largest part minus the number of its parts. We denote by $N(m,n)$ the number of partitions of $n$ with rank $m$. 
According to Atkin and Swinnerton-Dyer \cite[eq. (2.12)]{Atkin}, the generating function for $N(m,n)$ is given by
\begin{align}\label{GF}
\sum_{n=0}^\infty N(m,n) q^n = \frac{1}{(q;q)_\infty} \sum_{n=1}^\infty (-1)^{n-1} q^{n(3n-1)/2+mn}(1-q^n).
\end{align}
Here and throughout this paper, we use the following customary $q$-series notation:
\begin{align*}
& (a;q)_n = \begin{cases}
1, & \text{for $n=0$,}\\
(1-a)(1-aq)\cdots(1-aq^{n-1}), &\text{for $n>0$;}
\end{cases}\\
& (a;q)_\infty = \lim_{n\to\infty} (a;q)_n;\\
& 		
\begin{bmatrix}
n\\k
\end{bmatrix} 
=
\begin{cases}
\dfrac{(q;q)_n}{(q;q)_k(q;q)_{n-k}}, &  \text{if $0\leqslant k\leqslant n$},\\
0, &\text{otherwise.}
\end{cases}
\end{align*}
We sometimes use the following compressed notations:
\begin{align*}
& (a_1,a_2,\ldots,a_r;q)_n = (a_1;q)_n(a_2,q)_n\cdots(a_r;q)_n,\\
& (a_1,a_2,\ldots,a_r;q)_\infty = (a_1;q)_n(a_2,q)_n\cdots(a_r;q)_\infty.
\end{align*}
Because the infinite product $(a;q)_\infty$ 
diverges when $a\neq 0$ 
and $|q|\geqslant 1$, 
whenever $(a;q)_\infty$ appears in a formula, we shall assume $|q| <1$.

By \eqref{GF}, we immediately deduce that
\begin{equation}\label{eq:1}
\sum_{n=0}^\infty N(n) q^n = \frac{1}{(q;q)_\infty} \sum_{n=0}^\infty (-1)^n q^{n(3n+1)/2} = 1+\sum_{n=1}^\infty q^n 
\begin{bmatrix} 2n-1\\n-1 \end{bmatrix},
\end{equation}
and
\begin{equation}\label{eq:3.1}
\sum_{n=0}^\infty R(n) q^n = \frac{1}{(q;q)_\infty} \sum_{n=1}^\infty (-1)^{n+1} q^{n(3n+1)/2} = \sum_{n=1}^\infty q^{n+1}
\begin{bmatrix}
2n\\n-1
\end{bmatrix},
\end{equation}
where $N(n)$ is the number of partitions of $n$ with non-negative rank and
$R(n)$ is the number of partitions of $n$ with positive rank.
We remark that the sequences $\{N(n)\}_{n>0}$ and $\{R(n)\}_{n>0}$ are known and can be seen in the
On-Line Encyclopedia of Integer Sequence \cite[A064173,A064174]{Sloane}.

Linear inequalities involving Euler’s partition function $p(n)$ have been the subject of recent studies.
In \cite{Andrews12},  Andrews and Merca considered Euler's pentagonal number theorem 
\begin{align*}
\sum_{n=-\infty}^{\infty} (-1)^n q^{n(3n-1)/2} = (q;q)_\infty
\end{align*}
and proved a truncated theorem on partitions.

\begin{theorem}\label{th:1}
	For $k\geqslant 1$,
	\begin{equation*} 
	\frac{1}{(q;q)_\infty} \sum_{j=0}^{k-1} (-1)^j q^{j(3j+1)/2} (1-q^{2j+1}) 
	= 1+(-1)^{k-1} \sum_{n=1}^\infty \frac{q^{{k\choose 2}+(k+1)n}}{(q;q)_n}
	\begin{bmatrix}
	n-1\\k-1
	\end{bmatrix}.
	\end{equation*}	
\end{theorem}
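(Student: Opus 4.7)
The plan is to proceed by induction on $k$, viewing both sides as formal power series in $q$. For the base case $k = 1$, the left-hand side simplifies to $(1-q)/(q;q)_\infty = 1/(q^2;q)_\infty$, and the right-hand side becomes $1 + \sum_{n\geq 1} q^{2n}/(q;q)_n$ since $\binom{1}{2} = 0$ and $\begin{bmatrix} n-1\\0\end{bmatrix} = 1$; these agree by Euler's identity $1/(x;q)_\infty = \sum_{n\geq 0} x^n/(q;q)_n$ specialized at $x = q^2$.

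For the inductive step, set $f_k(q) = \sum_{j=0}^{k-1}(-1)^j q^{j(3j+1)/2}(1-q^{2j+1})$ and write the right-hand side as $1 + (-1)^{k-1} S_k(q)$, where
$$S_k(q) := \sum_{n\geq 1}\frac{q^{\binom{k}{2}+(k+1)n}}{(q;q)_n}\begin{bmatrix} n-1\\k-1\end{bmatrix}.$$
Since $f_{k+1}(q) - f_k(q) = (-1)^k q^{k(3k+1)/2}(1-q^{2k+1})$, the induction reduces to proving the key $q$-series identity
$$S_k(q) + S_{k+1}(q) = \frac{q^{k(3k+1)/2}(1-q^{2k+1})}{(q;q)_\infty}.$$

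To establish this, I would substitute $n = k+m$ in $S_k$ and $n = k+1+m$ in $S_{k+1}$, using $\binom{k}{2}+(k+1)k = k(3k+1)/2$ to pull out the common prefactor $q^{k(3k+1)/2}$. The resulting shifted sums feature the $q$-binomials $\begin{bmatrix} k+m-1\\m\end{bmatrix}$ and $\begin{bmatrix} k+m\\m\end{bmatrix}$, which I would relate via the $q$-Pascal rule $\begin{bmatrix} k+m\\m\end{bmatrix} = \begin{bmatrix} k+m-1\\m-1\end{bmatrix} + q^m\begin{bmatrix} k+m-1\\m\end{bmatrix}$. On the target side, rewriting $(1-q^{2k+1})/(q;q)_\infty = (1-q^{2k+1})/((q;q)_k \, (q^{k+1};q)_\infty)$ and invoking Euler's identity $1/(q^{k+1};q)_\infty = \sum_{m\geq 0} q^{(k+1)m}/(q;q)_m$ should produce a series expansion commensurate with the combined left-hand sum, after which the algebraic splitting $1-q^{2k+1} = (1-q^{k+m+1}) + q^{k+m+1}(1-q^{k-m})$ (or its dual) can be used to match coefficients.

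The hardest step is controlling this final cancellation: because $S_k$ and $S_{k+1}$ begin at different orders of $q$ and the exponents $\binom{k}{2}+(k+1)n$ and $\binom{k+1}{2}+(k+2)n$ shift inhomogeneously in $n$, the telescoping is not immediate and boundary terms at $m = 0$ require particular care. If the direct manipulation becomes unwieldy, a natural fallback is to recognize the identity as arising from a Bailey pair supported on the pentagonal exponents $j(3j \pm 1)/2$ and to apply a single iteration of Bailey's lemma, which would collapse the entire argument in one stroke and sidestep the term-by-term bookkeeping.
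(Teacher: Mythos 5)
First note that this paper does not prove Theorem \ref{th:1} at all: it is quoted from Andrews and Merca \cite{Andrews12}, and the analogous new results here (Theorems \ref{th:2}, \ref{Th:3.2}, \ref{TH:4.2}) are instead proved by applying Heine's transformation \eqref{eq:HT} directly to the tail $\sum_{n\geqslant k}$ of the theta series. Your induction is therefore a genuinely different, and viable, route. The base case is right, and the reduction of the inductive step to the single identity $S_k(q)+S_{k+1}(q)=q^{k(3k+1)/2}(1-q^{2k+1})/(q;q)_\infty$ is exactly correct. That identity is true and does fall to the elementary tools you name: after the shifts $n=k+m$ and $n=k+1+m$ and the simplification $\frac{1}{(q;q)_{k+m}}\cdot\frac{(q;q)_{k+m-1}}{(q;q)_{k-1}(q;q)_m}=\frac{1}{(q;q)_{k-1}(q;q)_m(1-q^{k+m})}$ (similarly with $k$ replaced by $k+1$ in the second sum), the required identity becomes $(1-q^k)\sum_{m\geqslant0}\frac{q^{(k+1)m}}{(q;q)_m(1-q^{k+m})}+q^{3k+2}\sum_{m\geqslant0}\frac{q^{(k+2)m}}{(q;q)_m(1-q^{k+m+1})}=\frac{1-q^{2k+1}}{(q^{k+1};q)_\infty}$, and the split $\frac{1-q^k}{1-q^{k+m}}=1-\frac{q^k(1-q^m)}{1-q^{k+m}}$ followed by the reindexing $m\mapsto m+1$ makes the two leftover sums combine, via $q^{(k+1)m}-q^{k+1}\cdot q^{(k+2)m}=q^{(k+1)m}(1-q^{k+m+1})$, into Euler's series $\sum_{m\geqslant0}q^{(k+1)m}/(q;q)_m=1/(q^{k+1};q)_\infty$. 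So your plan does close; the only criticisms are that this decisive cancellation is left as a sketch, and that the particular splitting of $1-q^{2k+1}$ you propose is not quite the one that works most cleanly. What the induction buys is elementarity (nothing beyond Euler's identity and $q$-Pascal is needed); what the transformation-based arguments buy, and what this paper actually needs, is a closed positive form for the tail $\sum_{n\geqslant k}$ itself, which is the engine behind the bisected versions and the resulting inequalities.
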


As a consequence of Theorem \ref{th:1}, Andrews and Merca derived the following linear partition inequality:
For $n>0$, $k\geqslant 1$,
\begin{equation}\label{eq:1.1}
(-1)^{k-1} \sum_{j=0}^{k-1} (-1)^j 
\Big( p\big(n-j(3j+1)/2\big) - p\big(n-j(3j+5)/2-1\big) \Big) \geqslant 0, 
\end{equation}
with strict inequality if $n\geqslant k(3k+1)/2$. 

Theorem \ref{th:1} has opened up a new study on truncated theta series and linear partition inequalities.
Other recent investigations involving truncated theta series and linear partition inequalities can be found in several papers by 
Andrews and Merca \cite{Andrews17},
Chan, Ho and Mao \cite{Chan},
Guo and Zeng \cite{Guo}, 
He, Ji and Zang \cite{He},   
Mao \cite{Mao,Mao17}, 
Merca \cite{Merca16}, 
and Merca, Wang and Yee \cite{MWY}.

In this paper, motivated by these results, we shall provide a bisected version of Theorem \ref{th:1}.
The first result contains a truncated form of the identity \eqref{eq:1}.

\begin{theorem}\label{th:2}
	For $|q|<1$ and $k\geqslant 1$, there holds
	\begin{align*} 
	& \frac{1}{(q;q)_\infty} \sum_{j=0}^{k-1} (-1)^j q^{j(3j+1)/2} \\
	&\qquad = 1+\sum_{j=1}^\infty q^j \begin{bmatrix} 2j-1\\j-1 \end{bmatrix}
	+(-1)^{k-1}  \frac{q^{k(3k+1)/2}}{(q,q^3;q^3)_\infty} \sum_{j=0}^\infty \frac{q^{j(3j+3k+2)}}{(q^3;q^3)_j(q^2;q^3)_{k+j}}
	\end{align*}	
	and
	\begin{align*} 
	& \frac{1}{(q;q)_\infty} \sum_{j=0}^{k-1} (-1)^{j} q^{j(3j+5)/2+1}\\
	&\qquad = \sum_{j=1}^\infty q^j \begin{bmatrix} 2j-1\\j-1 \end{bmatrix}
	+(-1)^{k-1}  \frac{q^{k(3k+5)/2+1}}{(q^2,q^3;q^3)_\infty} \sum_{j=0}^\infty \frac{q^{j(3j+3k+4)}}{(q^3;q^3)_j(q;q^3)_{k+j+1}}.
	\end{align*}	
\end{theorem}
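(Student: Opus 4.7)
The plan is to prove both identities by a telescoping argument in $k$. For the first identity, write it as $A_k = A_\infty + (-1)^{k-1} T_k$, where $A_k := \frac{1}{(q;q)_\infty}\sum_{j=0}^{k-1}(-1)^j q^{j(3j+1)/2}$, $A_\infty := 1 + \sum_{j\geq 1}q^j\begin{bmatrix}2j-1\\j-1\end{bmatrix}$ is its formal limit (by \eqref{eq:1}), and
\[
T_k := \frac{q^{k(3k+1)/2}}{(q,q^3;q^3)_\infty}\,U_k,\qquad U_k := \sum_{j\geq 0}\frac{q^{j(3j+3k+2)}}{(q^3;q^3)_j(q^2;q^3)_{k+j}}.
\]
Since $A_k\to A_\infty$ and $T_k\to 0$ as $k\to\infty$ in the $q$-adic topology on formal power series, it suffices to show $A_k-(-1)^{k-1}T_k$ is independent of $k$. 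Using $A_{k+1}-A_k=(-1)^k q^{k(3k+1)/2}/(q;q)_\infty$ and the factorization $(q;q)_\infty=(q^2;q^3)_\infty(q,q^3;q^3)_\infty$, this constancy reduces to the key identity
\[
U_k+q^{3k+2}\,U_{k+1}=\frac{1}{(q^2;q^3)_\infty}. \qquad (\star)
\]

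To prove $(\star)$, I would shift $j\mapsto j-1$ in $q^{3k+2}U_{k+1}$ and combine with $U_k$ term by term. The exponent identity $j(3j+3k-1)+3j=j(3j+3k+2)$ causes the numerators to collapse, yielding
\[
U_k+q^{3k+2}\,U_{k+1}=V_k,\qquad V_k:=\sum_{j\geq 0}\frac{q^{j(3j+3k-1)}}{(q^3;q^3)_j(q^2;q^3)_{k+j}}.
\]
Applying the same shift-and-combine trick to the difference $V_k-V_{k+1}$ (placed over the common denominator $(q^3;q^3)_j(q^2;q^3)_{k+j+1}$) then produces the self-similar recurrence
\[
V_k-V_{k+1}=q^{3k+2}(V_{k+2}-V_{k+1}).
\]
Thus $d_k:=V_k-V_{k+1}$ satisfies $d_k=-q^{3k+2}d_{k+1}$, which upon iteration makes $d_k$ divisible by arbitrarily high powers of $q$; hence $d_k=0$ as a formal power series. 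So $V_k$ is independent of $k$, and $\lim_{k\to\infty}V_k=1/(q^2;q^3)_\infty$ (only the $j=0$ term survives in the formal topology) identifies the common value, proving $(\star)$.

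The second identity is proved in exactly the same way: $(q^2;q^3)_{k+j}$ is replaced by $(q;q^3)_{k+j+1}$, the shift $3k+2$ is replaced by $3k+4$, and the key identity becomes $\widetilde U_k+q^{3k+4}\,\widetilde U_{k+1}=1/(q;q^3)_\infty$, which follows from $(q;q)_\infty=(q;q^3)_\infty(q^2,q^3;q^3)_\infty$ by the analogous telescoping. The main obstacle is strictly the $q$-exponent bookkeeping in the two shift-and-combine steps; once those are verified, the telescoping plus the formal-power-series limit delivers both identities without any further input.
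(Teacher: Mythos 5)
Your proposal is correct, but it proves the theorem by a genuinely different route than the paper. The paper computes the tail $\frac{1}{(q;q)_\infty}\sum_{n\geqslant k}(-1)^nq^{n(3n+1)/2}$ directly: it writes the theta tail as a $z\to 0$ limit of a ${_2}\phi_1$ series, applies Heine's transformation \eqref{eq:HT}, and lets the limit collapse the transformed series into the stated $q$-hypergeometric sum (and likewise for the second identity with $3k+2$ replaced by $3k+4$). You instead verify the already-known right-hand side by showing $A_k-(-1)^{k-1}T_k$ is independent of $k$, which reduces to the contiguous relation $U_k+q^{3k+2}U_{k+1}=1/(q^2;q^3)_\infty$; I checked the two shift-and-combine steps and the exponent identities $j(3j+3k-1)+3j=j(3j+3k+2)$ and $(j+1)(3j+3k+2)=j(3j+3k+5)+3k+2$ do make the numerators collapse exactly as you claim, yielding $U_k+q^{3k+2}U_{k+1}=V_k$ and then $V_k-V_{k+1}=q^{3k+2}(V_{k+2}-V_{k+1})$, so $V_k$ is constant and equals its formal limit $1/(q^2;q^3)_\infty$; the second identity goes through verbatim with $(q;q^3)_{k+j+1}$ and $3k+4$. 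The trade-off is clear: the paper's Heine-transformation template \emph{derives} the closed form of the tail (and reuses the identical mechanism for Theorems \ref{Th:3.2}, \ref{TH:4.1} and \ref{TH:4.2}), while your argument is more elementary and entirely self-contained within formal power series, at the cost of needing the answer in advance and two rounds of exponent bookkeeping plus the auxiliary constancy argument for $V_k$. Both are complete proofs; your use of \eqref{eq:1} to identify the $k\to\infty$ limit and of the $q$-adic topology to kill $d_k$ and $T_k$ is sound.
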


An immediate consequence owing to the positivity of the sums on the right hand side of the second identity
is given by the following infinite family of linear partition inequalities.

\begin{corollary}\label{cor:1.4}
	For $n>0$, $k\geqslant 1$,
	$$ (-1)^{k-1} \left( \sum_{j=0}^{k-1} (-1)^{j} p(n-j(3j+5)/2-1)  - N(n) \right) \geqslant 0.$$
	with strict inequality if $n\geqslant k(3k+5)/2+1$.	For example,
	\begin{align*}
	& p(n-1) \geqslant N(n),\\
	& p(n-1)-p(n-5) \leqslant N(n),\\
	& p(n-1)-p(n-5)+p(n-12) \geqslant N(n),\text{ and}\\
	& p(n-1)-p(n-5)+p(n-12)-p(n-22) \leqslant N(n).
	\end{align*}
\end{corollary}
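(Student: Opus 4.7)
The plan is to read off the coefficient of $q^n$ from both sides of the second identity in Theorem~\ref{th:2} and argue that the tail on the right hand side contributes with a definite sign. On the left hand side, $\frac{1}{(q;q)_\infty}$ is the generating function for $p(n)$, so multiplying by $\sum_{j=0}^{k-1}(-1)^j q^{j(3j+5)/2+1}$ and extracting coefficients yields exactly $\sum_{j=0}^{k-1}(-1)^j p\bigl(n-j(3j+5)/2-1\bigr)$. On the right hand side, the first summand is $\sum_{j\geqslant 1} q^j \begin{bmatrix} 2j-1 \\ j-1 \end{bmatrix}$, which by \eqref{eq:1} equals $\sum_{n\geqslant 1} N(n)\,q^n$; its coefficient at $q^n$ (for $n>0$) is therefore $N(n)$.

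The key step is then the positivity observation for the remaining term. The factor $\frac{1}{(q^2,q^3;q^3)_\infty}$ is a product of geometric series $\frac{1}{1-q^r}$ and hence has non-negative coefficients. Similarly $\frac{1}{(q^3;q^3)_j}$ and $\frac{1}{(q;q^3)_{k+j+1}}$ are products of such factors. Thus every term of
\[
\frac{q^{k(3k+5)/2+1}}{(q^2,q^3;q^3)_\infty}\sum_{j=0}^\infty \frac{q^{j(3j+3k+4)}}{(q^3;q^3)_j(q;q^3)_{k+j+1}}
\]
has non-negative $q$-expansion. Multiplying by $(-1)^{k-1}$ and rearranging gives precisely the stated inequality.

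For the strict inequality when $n\geqslant k(3k+5)/2+1$, I would isolate the $j=0$ contribution, which is
$\dfrac{q^{k(3k+5)/2+1}}{(q;q^3)_{k+1}(q^2,q^3;q^3)_\infty}$. Since the factor $(q;q^3)_{k+1}$ contains $(1-q)$, the denominator has a $\frac{1}{1-q}$ factor; equivalently, the associated partitions allow unbounded parts of size $1$, so every integer $n\geqslant k(3k+5)/2+1$ admits at least one such partition and the coefficient of $q^n$ in that $j=0$ piece is already strictly positive. The higher $j$-terms contribute non-negatively and cannot cancel this, so strictness propagates to the full series. The only delicate point is this positivity verification; everything else is a direct coefficient-comparison argument once Theorem~\ref{th:2} is in hand.
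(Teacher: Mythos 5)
Your proposal is correct and matches the paper's (implicit) argument exactly: the paper derives Corollary~\ref{cor:1.4} as ``an immediate consequence owing to the positivity of the sums on the right hand side of the second identity'' of Theorem~\ref{th:2}, which is precisely your coefficient-extraction and non-negativity argument. Your additional detail for strictness --- isolating the $j=0$ term and using the $\frac{1}{1-q}$ factor coming from $(q;q^3)_{k+1}$ --- correctly fills in the one point the paper leaves unstated.
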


Regarding the inequality \eqref{eq:1.1}, we recall the following
partition theoretic interpretation given by Andrews and Merca \cite[Theorem 1]{Andrews12}:
\begin{equation*}
(-1)^{k-1} \sum_{j=0}^{k-1} (-1)^j 
\Big( p\big(n-j(3j+1)/2\big) - p\big(n-j(3j+5)/2-1\big) \Big)= M_k(n),
\end{equation*}
where $M_k(n)$ is the number of partitions of $n$ in which
$k$ is the least integer that is not a part and there are
more parts $>k$ than there are $<k$. In \cite{Yee} has
given a combinatorial proof of this result. 
We can easily deduce that Corollary \ref{cor:1.4} is equivalent to the following result.

\begin{corollary}\label{cor:1.3}
	For $n>0$, $k\geqslant 1$,
	$$ (-1)^{k-1} \left( \sum_{j=0}^{k-1} (-1)^j p(n-j(3j+1)/2)  - N(n) \right) \geqslant M_k(n),$$
	with strict inequality if  $n\geqslant k(3k+5)/2+1$.
\end{corollary}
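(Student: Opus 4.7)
The plan is to derive Corollary \ref{cor:1.3} directly from Corollary \ref{cor:1.4} by combining it with the Andrews--Merca combinatorial identity recalled just above the statement. Let me denote
$$
A_k(n) = \sum_{j=0}^{k-1} (-1)^j\, p\big(n-j(3j+1)/2\big), \qquad
B_k(n) = \sum_{j=0}^{k-1} (-1)^j\, p\big(n-j(3j+5)/2-1\big).
$$
Then the Andrews--Merca interpretation of \eqref{eq:1.1} says precisely that $(-1)^{k-1}\bigl(A_k(n) - B_k(n)\bigr) = M_k(n)$, while Corollary \ref{cor:1.4} says $(-1)^{k-1}\bigl(B_k(n) - N(n)\bigr) \geqslant 0$.

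The main step is simply to add these two relations. Multiplying the quantity inside Corollary \ref{cor:1.3} by $(-1)^{k-1}$ gives
$$
(-1)^{k-1}\bigl(A_k(n) - N(n)\bigr) \;=\; (-1)^{k-1}\bigl(A_k(n) - B_k(n)\bigr) \;+\; (-1)^{k-1}\bigl(B_k(n) - N(n)\bigr) \;=\; M_k(n) + (-1)^{k-1}\bigl(B_k(n) - N(n)\bigr).
$$
Since the last summand is non-negative by Corollary \ref{cor:1.4}, the desired inequality follows immediately.

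For the strict-inequality assertion, I would observe that the hypothesis $n \geqslant k(3k+5)/2 + 1$ is exactly the condition under which Corollary \ref{cor:1.4} gives strict positivity of $(-1)^{k-1}\bigl(B_k(n) - N(n)\bigr)$. Adding a strictly positive quantity to $M_k(n) \geqslant 0$ preserves strictness, which yields the strict form of Corollary \ref{cor:1.3}. No hard obstacle arises: the entire argument is a one-line combination, and the only input beyond Corollary \ref{cor:1.4} is the already-quoted partition-theoretic identity of Andrews and Merca. Thus Corollary \ref{cor:1.3} and Corollary \ref{cor:1.4} are in fact equivalent, as announced in the text preceding the statement.
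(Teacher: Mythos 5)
Your argument is correct and is precisely the deduction the paper intends: the paper offers no separate proof of Corollary \ref{cor:1.3}, asserting only that it is ``easily deduced'' from Corollary \ref{cor:1.4} together with the Andrews--Merca identity $(-1)^{k-1}\bigl(A_k(n)-B_k(n)\bigr)=M_k(n)$, which is exactly the one-line addition you perform. The handling of the strict-inequality case is likewise the intended one.
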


The following theorem contains a truncated version of the identity  \eqref{eq:3.1}.
\begin{theorem}\label{th:3.1}
	For $|q|<1$ and $k> 1$, there holds
	\begin{align*} 
	& \frac{1}{(q;q)_\infty} \sum_{j=1}^{k-1} (-1)^{j+1} q^{j(3j+1)/2} \\
	&\qquad = \sum_{j=1}^\infty q^{j+1} \begin{bmatrix} 2j\\j-1 \end{bmatrix}
	+(-1)^{k}  \frac{q^{k(3k+1)/2}}{(q,q^3;q^3)_\infty} \sum_{j=0}^\infty \frac{q^{j(3j+3k+2)}}{(q^3;q^3)_j(q^2;q^3)_{k+j}}
	\end{align*}	
	and
	\begin{align*} 
	& \frac{1}{(q;q)_\infty}\left(1-\sum_{j=0}^{k-1} (-1)^{j} q^{j(3j+5)/2+1}\right)  \\
	&\qquad =1+ \sum_{j=1}^\infty q^{j+1} \begin{bmatrix} 2j\\j-1 \end{bmatrix}
	+(-1)^{k}  \frac{q^{k(3k+5)/2+1}}{(q^2,q^3;q^3)_\infty} \sum_{j=0}^\infty \frac{q^{j(3j+3k+4)}}{(q^3;q^3)_j(q;q^3)_{k+j+1}}.
	\end{align*}	
\end{theorem}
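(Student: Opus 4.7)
The plan is to deduce Theorem \ref{th:3.1} directly from Theorem \ref{th:2}, using the identity $1/(q;q)_\infty = \sum_{n\geqslant 0}\bigl(N(n)+R(n)\bigr)q^n$, i.e.\ $p(n)=N(n)+R(n)$. This decomposition is a consequence of conjugation of Young diagrams, which negates the rank and thus matches partitions of positive rank bijectively with partitions of negative rank; equivalently, one may add \eqref{eq:1} and \eqref{eq:3.1} and invoke Euler's pentagonal number theorem.

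For the first identity of Theorem \ref{th:3.1}, I would begin by observing
\[
\sum_{j=1}^{k-1}(-1)^{j+1}q^{j(3j+1)/2} \;=\; 1 - \sum_{j=0}^{k-1}(-1)^{j}q^{j(3j+1)/2}.
\]
Dividing by $(q;q)_\infty$ and substituting the first identity of Theorem \ref{th:2} into the right-hand sum yields
\[
\frac{1}{(q;q)_\infty}\sum_{j=1}^{k-1}(-1)^{j+1}q^{j(3j+1)/2} = \frac{1}{(q;q)_\infty} - 1 - \sum_{j=1}^\infty q^j\begin{bmatrix}2j-1\\j-1\end{bmatrix} + (-1)^{k}\frac{q^{k(3k+1)/2}}{(q,q^3;q^3)_\infty}\sum_{j=0}^\infty\frac{q^{j(3j+3k+2)}}{(q^3;q^3)_j(q^2;q^3)_{k+j}},
\]
after rewriting $-(-1)^{k-1}=(-1)^k$. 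Because $1/(q;q)_\infty - 1 - \sum_{j\geqslant 1} q^j\begin{bmatrix}2j-1\\j-1\end{bmatrix} = \sum_n p(n)q^n - \sum_n N(n)q^n = \sum_n R(n)q^n$, the leading terms collapse to $\sum_{j\geqslant 1} q^{j+1}\begin{bmatrix}2j\\j-1\end{bmatrix}$ by \eqref{eq:3.1}, matching the stated claim.

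For the second identity, I would similarly write
\[
\frac{1}{(q;q)_\infty}\Bigl(1 - \sum_{j=0}^{k-1}(-1)^{j}q^{j(3j+5)/2+1}\Bigr) = \frac{1}{(q;q)_\infty} - \frac{1}{(q;q)_\infty}\sum_{j=0}^{k-1}(-1)^{j}q^{j(3j+5)/2+1},
\]
apply the second identity of Theorem \ref{th:2} to the right-hand sum, and then use
\[
\frac{1}{(q;q)_\infty} - \sum_{j=1}^\infty q^j\begin{bmatrix}2j-1\\j-1\end{bmatrix} = 1 + \sum_{j=1}^\infty q^{j+1}\begin{bmatrix}2j\\j-1\end{bmatrix}
\]
(once again via $p=N+R$ together with \eqref{eq:1} and \eqref{eq:3.1}) to reach the announced form.

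The argument is essentially bookkeeping: no new $q$-series manipulation beyond Theorem \ref{th:2} is required, and the only pitfalls are careful sign-tracking and the $p=N+R$ identification. In this sense, the substantive work lies in Theorem \ref{th:2}, and Theorem \ref{th:3.1} is its natural companion obtained by taking complements against the full sum $\sum_{j\geqslant 0}(-1)^j q^{j(3j+1)/2}$ (resp.\ against $1$).
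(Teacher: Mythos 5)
Your proposal is correct and matches the paper's approach: the paper itself gives no separate proof, merely stating that Theorem \ref{th:3.1} ``is an equivalent version of Theorem \ref{th:2},'' and your derivation via $p(n)=N(n)+R(n)$ (equivalently, adding \eqref{eq:1} and \eqref{eq:3.1}) is exactly the bookkeeping that equivalence requires. The sign-tracking and the collapse of $1/(q;q)_\infty$ minus the $N(n)$ generating function to the $R(n)$ generating function both check out.
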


Theorem \ref{th:3.1} is not essentially a new result, it is an equivalent version of Theorem \ref{th:2}.
As a consequence of Theorem \ref{th:3.1} we remark the following equivalent form of Corollary \ref{cor:1.3}.

\begin{corollary}\label{cor:3.2}
	For $n\geqslant 0$, $k>1$,
	$$ (-1)^{k} \left( \sum_{j=1}^{k-1} (-1)^{j+1} p\big(n-j(3j+1)/2\big)  - R(n) \right) \geqslant M_k(n),$$
	with strict inequality if $n\geqslant k(3k+5)/2+1$.
\end{corollary}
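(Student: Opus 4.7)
The plan is to extract the coefficient of $q^n$ from the second identity of Theorem~\ref{th:3.1} and then convert the resulting inequality via the combinatorial reading of \eqref{eq:1.1} recalled in the discussion of Corollary~\ref{cor:1.4}.

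Using $1/(q;q)_\infty = \sum_{n\geqslant 0} p(n)q^n$ and identifying the middle sum on the right-hand side of the second identity as $\sum_{n\geqslant 0} R(n) q^n$ through \eqref{eq:3.1}, I would compare coefficients of $q^n$ for $n\geqslant 1$ to obtain
\begin{equation*}
p(n) - \sum_{j=0}^{k-1}(-1)^j p(n-j(3j+5)/2-1) - R(n) = (-1)^k c_n,
\end{equation*}
where $c_n$ denotes the coefficient of $q^n$ in the double $q$-series
\begin{equation*}
S_k(q) = \frac{q^{k(3k+5)/2+1}}{(q^2,q^3;q^3)_\infty}\sum_{j=0}^\infty\frac{q^{j(3j+3k+4)}}{(q^3;q^3)_j(q;q^3)_{k+j+1}}.
\end{equation*}
Every $c_n$ is non-negative, and the lowest power of $q$ occurring in $S_k(q)$ is $q^{k(3k+5)/2+1}$, so this yields
\begin{equation*}
(-1)^k\left(p(n) - R(n) - \sum_{j=0}^{k-1}(-1)^j p(n-j(3j+5)/2-1)\right) \geqslant 0,
\end{equation*}
with strict inequality whenever $n\geqslant k(3k+5)/2+1$. (For $n=0$ both sides of the identity contribute the constant $1$, so the inequality holds trivially.)

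To reach the form of Corollary~\ref{cor:3.2}, I would then invoke the equality version of \eqref{eq:1.1} stated in the discussion of Corollary~\ref{cor:1.3}, namely
\begin{equation*}
(-1)^{k-1}\sum_{j=0}^{k-1}(-1)^j\bigl(p(n-j(3j+1)/2) - p(n-j(3j+5)/2-1)\bigr) = M_k(n).
\end{equation*}
Solving this for $\sum_{j=0}^{k-1}(-1)^j p(n-j(3j+5)/2-1)$ and substituting into the inequality above, the $j=0$ contribution $p(n)$ in the sum over $j(3j+1)/2$ cancels the standalone $p(n)$, and a sign flip converts $(-1)^j$ to $(-1)^{j+1}$ for $j\geqslant 1$; what remains after rearranging is precisely Corollary~\ref{cor:3.2}, with the strict inequality condition preserved. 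The only real obstacle is careful bookkeeping of signs and index shifts; no new analytic ingredient is needed beyond Theorem~\ref{th:3.1} and the equality form of \eqref{eq:1.1}, in line with the author's remark that Theorem~\ref{th:3.1} is an equivalent version of Theorem~\ref{th:2}.
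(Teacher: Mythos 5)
Your proposal is correct and follows essentially the route the paper intends: the paper presents Corollary \ref{cor:3.2} as a consequence of the second identity of Theorem \ref{th:3.1} (positivity of the tail series) combined with the Andrews--Merca identity expressing the truncated pentagonal sum as $M_k(n)$, which is exactly what you do. The only minor looseness is that strict inequality for \emph{all} $n\geqslant k(3k+5)/2+1$ (not just at the lowest exponent) requires noting that the cofactor of $q^{k(3k+5)/2+1}$ has every coefficient positive, e.g.\ because the $j=0$ term contains the factor $1/(1-q)$ -- but this matches the level of detail the paper itself supplies.
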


Theorems \ref{th:2} and \ref{th:3.1} are good reasons to look for new infinite families of
linear inequalities for the partition function $p(n)$.
The rest of this paper is organized as follows. We will first prove Theorem \ref{th:2} in Section \ref{S2}. In Section \ref{S3}, we consider the partitions with non-negative crank and provide a truncated form of an identity of Auluck \cite{Auluck}. 
Section \ref{S4} is devoted the partitions with rank $-2$ or less. Connections between partitions with rank $-2$ or less and partitions with positive crank are given in this context.

\section{Proof of Theorem \ref{th:2}}
\label{S2}

To prove the theorem, we consider the second identity by Heine's transformation  of ${_{2}}\phi_{1}$ series \cite[(III.2)]{gasper}, namely
\begin{equation}\label{eq:HT}
{_{2}}\phi_{1}\bigg(\begin{matrix}a, b\\ c\end{matrix}\,;q,z\bigg)
= \frac{(c/b,bz;q)_\infty}{(c,z;q)_\infty} 
{_{2}}\phi_{1}\bigg(\begin{matrix}abz/c, b\\ bz\end{matrix}\,;q,c/b\bigg).
\end{equation}
Rewriting \eqref{eq:1} as
\begin{equation*}
\frac{1}{(q;q)_\infty} \sum_{n=0}^{k-1} (-1)^n q^{n(3n+1)/2} 
= 1+\sum_{n=1}^\infty q^n \begin{bmatrix} 2n-1\\n-1 \end{bmatrix}
- \frac{1}{(q;q)_\infty} \sum_{n=k}^{\infty} (-1)^n q^{n(3n+1)/2},
\end{equation*}
we get
\allowdisplaybreaks{
\begin{align*}
& \frac{1}{(q;q)_\infty} \sum_{n=k}^\infty (-1)^n q^{n(3n+1)/2}\\
& \quad = (-1)^k \frac{ q^{k(3k+1)/2}}{(q;q)_\infty} \sum_{n=0}^\infty (-1)^n q^{n(6k+1)/2+3n^2/2}\\
& \quad = (-1)^k \frac{ q^{k(3k+1)/2}}{(q;q)_\infty} \lim_{z\to 0} \sum_{n=0}^\infty \frac{(q^{3k+2}/z;q^3)_n}{(z;q^3)_n} z^n\\
& \quad = (-1)^k \frac{ q^{k(3k+1)/2}}{(q;q)_\infty} \lim_{z\to 0} {_{2}}\phi_{1}\bigg(\begin{matrix}q^{3},  q^{3k+2}/z\\ z\end{matrix}\,;q^{3},z\bigg)\\
& \quad = (-1)^k \frac{ q^{k(3k+1)/2}}{(q;q)_\infty} \\
& \quad\qquad \times\lim_{z\to 0} \frac{(z^2/q^{3k+2},q^{3k+2};q^3)_\infty}{(z;q^3)^2_\infty} 
{_{2}}\phi_{1}\bigg(\begin{matrix}q^{3k+5}/z,  q^{3k+2}/z\\ q^{3k+2}\end{matrix}\,;q^{3},z^2/q^{3k+2}\bigg)\\
& \quad = (-1)^k q^{k(3k+1)/2}\frac{(q^{3k+2};q^3)_\infty}{(q;q)_\infty} \lim_{z\to 0} \sum_{n=0}^\infty \frac{(q^{3k+5}/z,q^{3k+2}/z;q^3)_n}{(q^3,q^{3k+2};q^3)_n} \left(\frac{z^2}{q^{3k+2}} \right)^n \\
& \quad = (-1)^k q^{k(3k+1)/2}\frac{(q^{3k+2};q^3)_\infty}{(q;q)_\infty} \lim_{z\to 0} \sum_{n=0}^\infty \frac{(-1)^n q^{3n(n+1)/2} (q^{3k+2}/z;q^3)_n z^n}{(q^3,q^{3k+2};q^3)_n} \\
& \quad = (-1)^k q^{k(3k+1)/2}\frac{(q^{3k+2};q^3)_\infty}{(q;q)_\infty} \sum_{n=0}^\infty \frac{q^{n(3n+3k+2)}}{(q^3,q^{3k+2};q^3)_n} \\
& \quad = (-1)^k q^{k(3k+1)/2}\frac{(q^{2};q^3)_\infty}{(q;q)_\infty} \sum_{n=0}^\infty \frac{q^{n(3n+3k+2)}}{(q^3;q^3)_n(q^2;q^3)_{n+k}}.
\end{align*}}
The first identity is proved.

Considering Euler's pentagonal number theorem, the identity \eqref{eq:1} becomes
\begin{equation*}
\frac{1}{(q;q)_\infty} \sum_{n=0}^{k-1} (-1)^n q^{n(3n+5)/2+1} = \sum_{n=1}^\infty q^n 
\begin{bmatrix}
2n-1\\n-1
\end{bmatrix}
- \frac{1}{(q;q)_\infty} \sum_{n=k}^\infty (-1)^n q^{n(3n+5)/2+1}.
\end{equation*}
The proof of the second identity is quite similar to the proof of the first one.
We can write
\begin{align*}
& \frac{1}{(q;q)_\infty} \sum_{n=k}^\infty (-1)^n q^{n(3n+5)/2+1}\\
& \quad = (-1)^k \frac{ q^{k(3k+5)/2+1}}{(q;q)_\infty} \sum_{n=0}^\infty (-1)^n q^{n(6k+5)/2+3n^2/2}\\
& \quad = (-1)^k \frac{ q^{k(3k+5)/2+1}}{(q;q)_\infty} \lim_{z\to 0} \sum_{n=0}^\infty \frac{(q^{3k+4}/z;q^3)_n}{(z;q^3)_n} z^n\\
& \quad = (-1)^k \frac{ q^{k(3k+5)/2+1}}{(q;q)_\infty} \lim_{z\to 0} {_{2}}\phi_{1}\bigg(\begin{matrix}q^{3},  q^{3k+4}/z\\ z\end{matrix}\,;q^{3},z\bigg)\\
& \quad = (-1)^k \frac{ q^{k(3k+5)/2+1}}{(q;q)_\infty} \\
& \quad\qquad \times\lim_{z\to 0} \frac{(z^2/q^{3k+4},q^{3k+4};q^3)_\infty}{(z;q^3)^2_\infty} 
{_{2}}\phi_{1}\bigg(\begin{matrix}q^{3k+7}/z,  q^{3k+4}/z\\ q^{3k+4}\end{matrix}\,;q^{3},z^2/q^{3k+4}\bigg)\\
& \quad = (-1)^k q^{k(3k+5)/2+1}\frac{(q^{3k+4};q^3)_\infty}{(q;q)_\infty} \lim_{z\to 0} \sum_{n=0}^\infty \frac{(q^{3k+7}/z,q^{3k+4}/z;q^3)_n}{(q^3,q^{3k+4};q^3)_n} \left(\frac{z^2}{q^{3k+4}} \right)^n \\
& \quad = (-1)^k q^{k(3k+5)/2+1}\frac{(q^{3k+4};q^3)_\infty}{(q;q)_\infty} \lim_{z\to 0} \sum_{n=0}^\infty \frac{(-1)^n q^{3n(n+1)/2} (q^{3k+4}/z;q^3)_n z^n}{(q^3,q^{3k+4};q^3)_n} \\
& \quad = (-1)^k q^{k(3k+5)/2+1}\frac{(q^{3k+4};q^3)_\infty}{(q;q)_\infty} \sum_{n=0}^\infty \frac{q^{n(3n+3k+4)}}{(q^3,q^{3k+4};q^3)_n} \\
& \quad = (-1)^k q^{k(3k+5)/2+1}\frac{(q;q^3)_\infty}{(q;q)_\infty} \sum_{n=0}^\infty \frac{q^{n(3n+3k+4)}}{(q^3;q^3)_n(q;q^3)_{n+k+1}}.
\end{align*}
This concludes the proof.

\section{Truncated identity of Auluck}
\label{S3}

In 1988, Andrews and Garvan \cite{Andrews88} defined the crank of an integer partition as follows. 
The crank of a partition is the largest part of the partition if there are no ones as parts and otherwise is the number of parts larger than the number of ones minus the number of ones. 
If $M(m,n)$ denotes the number of partitions of $n$ with crank $m$, then \cite{Andrews88}:
\begin{align}\label{eq:3.1a}
\sum_{n=0}^\infty \sum_{m=-\infty}^\infty M(m,n) z^m q^n = \frac{(q;q)_\infty}{(zq;q)_\infty (q/z;q)_\infty}.
\end{align}
In this section we denote by $C(n)$ the number of partition of $n$ with non-negative crank.
Recently, Uncu \cite{Uncu} proved that the number of partitions into even number of distinct parts whose odd-indexed parts’ sum is $n$ is
equal to the number of partitions of $n$ with non-negative crank. In this context he provided the following result.

\begin{theorem}\label{Th:Uncu}
	The generating function for partitions with non-negative crank is
	\begin{align*}
	\sum_{n=0}^\infty C(n) q^n = \frac{1}{(q;q)_\infty} \sum_{n=0}^\infty  (-1)^n q^{n(n+1)/2}.
	\end{align*}
\end{theorem}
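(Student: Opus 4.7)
The plan is to reduce the identity to a formula for the generating function of partitions with crank exactly zero, and then to extract that formula from the bilateral crank generating function \eqref{eq:3.1a} by a residue argument.

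First I would observe that the right-hand side of \eqref{eq:3.1a} is invariant under $z\mapsto 1/z$, giving $M(m,n)=M(-m,n)$. Splitting $C(n)=\sum_{m\geq 0}M(m,n)$ into the $m=0$ term and the $m\geq 1$ terms, and using $\sum_{m\geq 1}M(m,n)=\tfrac12(p(n)-M(0,n))$ that follows from this symmetry, I obtain $C(n)=\tfrac12(p(n)+M(0,n))$. Combined with $\sum_n p(n)\,q^n=1/(q;q)_\infty$, the theorem then becomes equivalent to the single closed-form identity
\[
\sum_{n\geq 0}M(0,n)\,q^n=\frac{1}{(q;q)_\infty}\left(2\sum_{n\geq 0}(-1)^n q^{n(n+1)/2}-1\right).
\]

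To establish this formula I would view $F(z,q):=(q;q)_\infty/[(zq;q)_\infty(q/z;q)_\infty]$ as a Laurent series convergent on the annulus $|q|<|z|<|q|^{-1}$, so that $\sum_n M(0,n)\,q^n$ is precisely its constant-in-$z$ term. I would compute this constant term as a contour integral around $|z|=1$, deforming the contour inward through each pole $z=q^k$ ($k\geq 1$) of $(q/z;q)_\infty$ and invoking the fact that $F(z,q)$ vanishes faster than any power of $z$ as $|z|\to 0$ to drop the inner-contour contribution. The residue at $z=q^k$ can be computed directly and should simplify to an expression of the form $(-1)^{k-1}q^{k(k-1)/2}(1-q^k)/(q;q)_\infty$; summing over $k\geq 1$, splitting $1-q^k$, and reindexing one of the two resulting pieces by $k\mapsto k+1$ should collapse the sum into the triangular-number theta series appearing on the right-hand side.

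The main obstacle will be the residue calculation at $z=q^k$: one has to split $(q/z;q)_\infty$ into its factors with indices below, at, and above $k$, handle the below-$k$ factors via $1-q^{-m}=-q^{-m}(1-q^m)$, and simplify the resulting expression using the telescope $(q^{k+1};q)_\infty(q;q)_k=(q;q)_\infty$. Once this bookkeeping is done, the remaining rearrangement is mechanical. An alternative route---which avoids the contour integration entirely---is Uncu's own combinatorial approach, in which the generating function is first obtained from the stated bijection with partitions into an even number of distinct parts whose odd-indexed parts sum to $n$, and then a standard $q$-series manipulation produces the triangular-number series.
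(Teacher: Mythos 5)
The paper does not actually prove Theorem~\ref{Th:Uncu}: it is quoted as a known result, with three proofs attributed to Uncu \cite{Uncu}, Ballantine and Merca \cite{BM}, and Andrews and Newman \cite{Andrews19} (the last of these, like yours, starts from \eqref{eq:3.1a}). So there is no in-paper argument to compare against, and your proposal must stand on its own. On its own terms it is essentially sound: the residue of $F(z,q)\,dz/z$ at $z=q^k$ does come out to $(-1)^{k-1}q^{k(k-1)/2}(1-q^k)/(q;q)_\infty$ (using $(q;q)_{k-1}(q^{k+1};q)_\infty=(q;q)_\infty/(1-q^k)$ exactly as you anticipate), summing over $k\geqslant 1$ and splitting $1-q^k$ gives $2\sum_{n\geqslant 0}(-1)^nq^{n(n+1)/2}-1$, and the justification for discarding the shrinking inner contours (poles accumulating at $z=0$, with $F(z,q)$ vanishing to infinite order there) is standard.

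One point needs explicit care before the write-up is complete: the coefficients of \eqref{eq:3.1a} do not agree with the combinatorial crank counts at $n=1$. The $q^1$ coefficient of the right-hand side of \eqref{eq:3.1a} is $z+z^{-1}-1$, whereas the unique partition of $1$ has crank $-1$, so combinatorially $M(-1,1)=1$ and all other $M(m,1)=0$. Consequently the symmetry $M(m,n)=M(-m,n)$ and your identity $C(n)=\tfrac12\bigl(p(n)+M(0,n)\bigr)$ are false at $n=1$ for the combinatorial statistics (the latter would give $C(1)=\tfrac12$). The fix is to run the entire symmetry-plus-residue argument with the Laurent coefficients of \eqref{eq:3.1a} throughout, and then observe separately that the sum over $m\geqslant 0$ of those coefficients at $q^1$ is $(-1)+1=0$, which does equal the combinatorial $C(1)$. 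With that one-line patch your argument proves the theorem; the alternative combinatorial route via Uncu's bijection that you mention is genuinely different and avoids the issue, but is not what the paper relies on either.
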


We remark that this result was proved independently by Ballantine and Merca \cite{BM} in a paper that investigate connections between least $r$-gaps in partitions and partitions with non-negative rank and non-negative crank. In this paper they proved that the number of partitions of $n$ with nonnegative crank is even except when $n$ is twice a generalized pentagonal number.
Very recently,  Andrews and Newman \cite{Andrews19} considered \eqref{eq:3.1a} and provided a different proof for Theorem \ref{Th:Uncu}.
In 2011, Andrews \cite{Andrews11} remarked that the following theta identity
\begin{align}\label{eq:3.2}
\frac{1}{(q;q)_\infty} \sum_{n=0}^\infty  (-1)^n q^{n(n+1)/2}  = \sum_{n=0}^\infty \frac{q^{n(n+1)}}{(q;q)^2_n}
\end{align}
is effectively equivalent to an identity of Auluck \cite[eq. (10)]{Auluck} published in $1951$.
We have the following truncated form of the identity \eqref{eq:3.2}.

\begin{theorem}\label{Th:3.2}
	For $k\geqslant 1$,
	\begin{equation*} 
	\frac{1}{(q;q)_\infty} \sum_{n=0}^{k-1} (-1)^n q^{n(n+1)/2}
	= \sum_{n=0}^\infty \frac{q^{n(n+1)}}{(q;q)^2_n} + (-1)^{k-1} q^{k(k+1)/2} \sum_{n=0}^\infty \frac{q^{n(n+k+1)}}{(q;q)_n(q;q)_{n+k}}.
	\end{equation*}	
\end{theorem}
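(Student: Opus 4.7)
The plan is to mirror the argument for Theorem \ref{th:2}, but now at base $q$ rather than $q^3$, since the exponents $n(n+1)/2$ in \eqref{eq:3.2} are triangular numbers. Using \eqref{eq:3.2}, I would first split the left-hand side of Theorem \ref{Th:3.2} as
\[
\frac{1}{(q;q)_\infty}\sum_{n=0}^{k-1}(-1)^n q^{n(n+1)/2}
 = \sum_{n=0}^{\infty}\frac{q^{n(n+1)}}{(q;q)_n^2}
 \ -\ \frac{1}{(q;q)_\infty}\sum_{n=k}^{\infty}(-1)^n q^{n(n+1)/2},
\]
so that the whole problem reduces to evaluating the tail. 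Shifting $n\mapsto n+k$ and pulling out $(-1)^k q^{k(k+1)/2}$ converts the tail into $(-1)^k\,q^{k(k+1)/2}(q;q)_\infty^{-1}\sum_{n\geqslant 0}(-1)^n q^{n(n+2k+1)/2}$.

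Next, following the template of Section \ref{S2}, I would represent the remaining theta sum as a $z\to 0$ limit by writing $(-1)^n q^{n(n+2k+1)/2}=\lim_{z\to 0}(q^{k+1}/z;q)_n z^n/(z;q)_n$, so the sum becomes
\[
\lim_{z\to 0}\,{_{2}}\phi_{1}\bigg(\begin{matrix}q,\,q^{k+1}/z\\ z\end{matrix}\,;\,q,\,z\bigg).
\]
Applying Heine's transformation \eqref{eq:HT} with $(a,b,c)=(q,\,q^{k+1}/z,\,z)$ rewrites this as
\[
\lim_{z\to 0}\,\frac{(z^2/q^{k+1},\,q^{k+1};q)_\infty}{(z;q)_\infty^{2}}\,
{_{2}}\phi_{1}\bigg(\begin{matrix}q^{k+2}/z,\,q^{k+1}/z\\ q^{k+1}\end{matrix}\,;\,q,\,z^2/q^{k+1}\bigg).
\]
Sending $z\to 0$ kills both copies of $(z;q)_\infty$, leaves $(q^{k+1};q)_\infty$ as a prefactor, and by two applications of $\lim_{z\to 0}(q^a/z;q)_n z^n=(-1)^n q^{an+n(n-1)/2}$ the generic summand $(q^{k+2}/z,\,q^{k+1}/z;q)_n\,(z^2/q^{k+1})^n/\bigl((q^{k+1},q;q)_n\bigr)$ collapses to $q^{n(n+k+1)}/\bigl((q;q)_n(q^{k+1};q)_n\bigr)$.

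Finally, to reach the exact shape stated in the theorem I would use the elementary identity $(q^{k+1};q)_\infty/(q^{k+1};q)_n=(q;q)_\infty/(q;q)_{n+k}$, which lets the prefactor $(q^{k+1};q)_\infty/(q;q)_\infty$ absorb $(q^{k+1};q)_n$ in the denominator and turn it into $(q;q)_{n+k}$. The tail then takes the clean form $(-1)^k q^{k(k+1)/2}\sum_{n\geqslant 0}q^{n(n+k+1)}/\bigl((q;q)_n(q;q)_{n+k}\bigr)$, and subtracting it in the decomposition above flips the sign to $(-1)^{k-1}$, giving Theorem \ref{Th:3.2}. I do not expect any real obstacle: the calculation is a base-$q$ analogue of Section \ref{S2}, and the only point requiring care is the bookkeeping of the two factors of $z$ against $z^2/q^{k+1}$ in the limit after Heine.
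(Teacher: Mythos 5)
Your proposal is correct and follows essentially the same route as the paper's own proof: split off the tail via \eqref{eq:3.2}, shift the index to extract $(-1)^k q^{k(k+1)/2}$, realize the remaining theta sum as a $z\to 0$ limit of a ${_2}\phi_1$ with $a=q$, and apply Heine's transformation \eqref{eq:HT}, finishing with $(q^{k+1};q)_\infty/(q^{k+1};q)_n=(q;q)_\infty/(q;q)_{n+k}$. All the exponent bookkeeping in your outline checks out, so there is nothing to add.
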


\begin{proof}
	The proof of this theorem is quite similar to the proof of Theorem \ref{th:2}.
	The identity \eqref{eq:3.2} can be written as:
	$$
	\frac{1}{(q;q)_\infty} \sum_{n=0}^{k-1}  (-1)^n q^{n(n+1)/2} = \sum_{n=0}^\infty \frac{q^{n(n+1)}}{(q;q)^2_n}-\frac{1}{(q;q)_\infty} \sum_{n=k}^\infty  (-1)^n q^{n(n+1)/2}.
	$$	
	We have
	\allowdisplaybreaks{
		\begin{align*}
		& \frac{1}{(q;q)_\infty} \sum_{n=k}^\infty  (-1)^n q^{n(n+1)/2}\\
		&\quad = (-1)^k \frac{q^{k(k+1)/2}}{(q;q)_\infty} \sum_{n=0}^\infty  (-1)^n q^{n(2k+1)/2+n^2/2}\\
		&\quad = (-1)^k \frac{q^{k(k+1)/2}}{(q;q)_\infty} \lim_{z\to 0} \sum_{n=0}^\infty \frac{(q^{k+1}/z;q)_n}{(z;q)_n}\\
		&\quad = (-1)^k \frac{q^{k(k+1)/2}}{(q;q)_\infty} \lim_{z\to 0} \frac{(z^2/q^{k+1},q^{k+1};q)_\infty}{(z;q)^2_\infty} \sum_{n=0}^\infty \frac{(q^{k+2}/z,q^{k+1}/z;q)_n}{(q;q)_n(q^{k+1};q)_n}\left(\frac{z^2}{q^{k+1}} \right)^n 
		\tag{By Heine's transformation \eqref{eq:HT}}\\
		&\quad = (-1)^k q^{k(k+1)/2} \frac{(q^{k+1};q)_\infty}{(q;q)_\infty} \sum_{n=0}^\infty \frac{q^{n(n+k+1)}}{(q,q^{k+1};q)_n}\\
		&\quad = (-1)^k q^{k(k+1)/2} \sum_{n=0}^\infty \frac{q^{n(n+k+1)}}{(q;q)_n(q;q)_{n+k}}.
		\end{align*}
	}
	This concludes the proof.
\end{proof}

In analogy with Corollary \ref{cor:1.3}, we derive a new infinite family of linear inequalities for $p(n)$.

\begin{corollary}
	For $n\geqslant 0$, $k\geqslant 1$,
	$$ (-1)^{k-1} \left( \sum_{j=0}^{k-1} (-1)^{j} p\big(n-j(j+1)/2\big)  - C(n) \right) \geqslant 0,$$
	with strict inequality if $n\geqslant k(k+1)/2$. For example,
	\begin{align*}
	& p(n) \geqslant C(n),\\
	& p(n)-p(n-1) \leqslant C(n),\\
	& p(n)-p(n-1)+p(n-3) \geqslant C(n), \text{ and}\\
	& p(n)-p(n-1)+p(n-3)-p(n-6) \leqslant C(n).	
	\end{align*}
\end{corollary}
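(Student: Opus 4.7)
The plan is to read off the corollary directly from Theorem \ref{Th:3.2} by matching the right-hand side against the generating function for $C(n)$ and checking positivity of the remainder. Combining Theorem \ref{Th:Uncu} with the Auluck-type identity \eqref{eq:3.2} gives
\begin{equation*}
\sum_{n=0}^\infty C(n)\, q^n = \sum_{n=0}^\infty \frac{q^{n(n+1)}}{(q;q)_n^2},
\end{equation*}
so the first sum on the right-hand side of Theorem \ref{Th:3.2} is exactly $\sum_{n\geqslant 0} C(n)q^n$. This is the bridge that turns the truncated theta identity into a statement about $C(n)$.

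Next I would rearrange Theorem \ref{Th:3.2} as
\begin{equation*}
(-1)^{k-1}\left(\frac{1}{(q;q)_\infty}\sum_{j=0}^{k-1}(-1)^{j}q^{j(j+1)/2} \;-\; \sum_{n=0}^\infty C(n)\,q^n\right) = q^{k(k+1)/2}\sum_{n=0}^\infty \frac{q^{n(n+k+1)}}{(q;q)_n\,(q;q)_{n+k}}
\end{equation*}
and then extract the coefficient of $q^n$. Since $1/(q;q)_\infty = \sum_{n\geqslant 0} p(n) q^n$, the coefficient of $q^n$ on the left-hand side is precisely
\begin{equation*}
(-1)^{k-1}\left(\sum_{j=0}^{k-1}(-1)^{j}\,p\!\left(n-j(j+1)/2\right)-C(n)\right),
\end{equation*}
which is the quantity whose non-negativity we must establish.

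The main (and only) substantive point is that the right-hand side has non-negative coefficients as a power series in $q$. This is transparent: every factor $1/(q;q)_n$ and $1/(q;q)_{n+k}$ has non-negative coefficients, so each summand $q^{n(n+k+1)}/((q;q)_n(q;q)_{n+k})$ has non-negative coefficients, and these are then multiplied by $q^{k(k+1)/2}$. This immediately gives the inequality. For the strict inequality when $n\geqslant k(k+1)/2$, I would isolate the $n=0$ term of the inner sum, which contributes $q^{k(k+1)/2}/(q;q)_k$; this factor is the generating function for partitions into at most $k$ parts, so its coefficient of $q^{n-k(k+1)/2}$ is strictly positive for every $n\geqslant k(k+1)/2$. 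Since all other terms contribute non-negatively, the overall coefficient of $q^n$ on the right-hand side is strictly positive, establishing strict inequality in the required range.

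I do not anticipate a genuine obstacle here; the proof is essentially a bookkeeping exercise once Theorem \ref{Th:3.2} and Theorem \ref{Th:Uncu} are in hand. The only point requiring a moment of care is the verification of strict positivity, which is why I would highlight the $n=0$ term separately rather than merely assert that the remainder series is non-trivial.
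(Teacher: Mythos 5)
Your proposal is correct and follows exactly the route the paper intends: the corollary is stated as an immediate consequence of Theorem \ref{Th:3.2} (together with Theorem \ref{Th:Uncu} and the identity \eqref{eq:3.2} identifying $\sum_{n\geqslant 0}C(n)q^n$ with $\sum_{n\geqslant 0}q^{n(n+1)}/(q;q)_n^2$), obtained by extracting the coefficient of $q^n$ and using the manifest non-negativity of the tail series. Your treatment of the strict inequality via the $n=0$ term $q^{k(k+1)/2}/(q;q)_k$ is a correct and slightly more explicit justification than the paper, which leaves this detail implicit.
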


\section{Garden of Eden partitions}
\label{S4}

In $2007$, B. Hopkins and J. A. Sellers \cite{Hopkins} provided  a formula that counts the number of partitions of $n$ that have rank $-2$ or less.
Following the terminology of cellular automata and combinatorial game theory, they call these Garden of Eden partitions.
These partitions arise naturally in analyzing the game \textit{Bulgarian solitaire} which was popularized by Gardner \cite{Gardner} in $1983$.
By \eqref{GF}, Hopkins and Sellers obtained
\begin{equation}\label{eq:4.1}
\sum_{n=0}^\infty ge(n) q^n = \frac{1}{(q;q)_\infty} \sum_{n=1}^\infty (-1)^{n-1} q^{3n(n+1)/2},
\end{equation}
where $ge(n)$ counts the Garden of Eden partitions of $n$.  We remark the following theta identity.

\begin{theorem}\label{TH:4.1}
	For $|q|<1$,
	$$ \frac{1}{(q;q)_\infty} \sum_{n=1}^\infty (-1)^{n-1} q^{3n(n+1)/2} 
	= \frac{1}{(q,q^2;q^3)_\infty} \sum_{n=0}^\infty \frac{q^{3(n+1)^2}}{(q^3;q^3)_n(q^3;q^3)_{n+1}}.$$
\end{theorem}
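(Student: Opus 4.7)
The plan is to mirror the template developed in Section \ref{S2} for Theorem \ref{th:2}: express the theta series on the left as a $z\to 0$ limit of a ${}_2\phi_1$ series, apply Heine's transformation \eqref{eq:HT}, and then pass to the limit in the transformed series. The natural base for the ${}_2\phi_1$ will be $Q:=q^3$, matching the $(q^3;q^3)$ factors that appear on the right-hand side.

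First I would clear $(q;q)_\infty$ from both sides via $(q;q)_\infty=(q,q^2;q^3)_\infty(Q;Q)_\infty$ and rewrite everything in the variable $Q$, reducing the claim to
\begin{align*}
\sum_{n=1}^\infty (-1)^{n-1} Q^{n(n+1)/2} = (Q;Q)_\infty \sum_{n=0}^\infty \frac{Q^{(n+1)^2}}{(Q;Q)_n (Q;Q)_{n+1}}.
\end{align*}
A shift $n\mapsto n+1$ on the left factors out a $Q$ matching $Q^{(n+1)^2}=Q\cdot Q^{n(n+2)}$ on the right, so it suffices to prove
\begin{align*}
\sum_{n=0}^\infty (-1)^n Q^{n(n+3)/2} = (Q;Q)_\infty \sum_{n=0}^\infty \frac{Q^{n(n+2)}}{(Q;Q)_n (Q;Q)_{n+1}}.
\end{align*}

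Following the template exactly, I would recognize the left side as $\lim_{z\to 0}{}_2\phi_1(Q,Q^2/z;z;Q,z)$ (using $(Q^2/z;Q)_n \sim (-1)^n z^{-n} Q^{n(n+3)/2}$ as $z\to 0$), apply Heine's transformation \eqref{eq:HT} with $a=Q$, $b=Q^2/z$, $c=z$, and then take the limit in the resulting
\begin{align*}
\frac{(z^2/Q^2,\,Q^2;Q)_\infty}{(z;Q)_\infty^2}\;{}_2\phi_1\!\bigg(\begin{matrix}Q^3/z,\;Q^2/z\\ Q^2\end{matrix};\,Q,\,z^2/Q^2\bigg).
\end{align*}
Term by term the singular factors from $Q^3/z$ and $Q^2/z$ exactly cancel $(z^2/Q^2)^n$, leaving $(Q^2;Q)_\infty\sum_{n\geqslant 0} Q^{n(n+2)}/(Q,Q^2;Q)_n$. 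Rewriting $(Q,Q^2;Q)_n=(Q;Q)_n(Q;Q)_{n+1}/(1-Q)$ and using $(1-Q)(Q^2;Q)_\infty=(Q;Q)_\infty$ then produces the required right-hand side.

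The only real obstacle is the bookkeeping of exponents through the index shift and the Heine transform: one has to verify that the power $Q^{n(n+3)/2}$ arising from $(Q^2/z;Q)_n$ lines up with the $(n+1)^2$ on the right-hand side, and that the singular prefactor collapses cleanly to $(Q;Q)_\infty$ after the limit. No new technical ingredient beyond Heine's transformation \eqref{eq:HT} is needed.
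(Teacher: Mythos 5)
Your proposal is correct and is essentially the paper's own proof: after the substitution $Q=q^3$ and clearing $(q;q)_\infty$, your ${}_2\phi_1(Q,Q^2/z;z;Q,z)$ is exactly the paper's ${}_2\phi_1(q^3,q^6/z;z;q^3,z)$, and the application of Heine's transformation \eqref{eq:HT}, the $z\to0$ limit, and the final Pochhammer bookkeeping all coincide with the argument given in Section \ref{S4}. No substantive difference.
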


\begin{proof}
	We can write
	\begin{align*}
	& \frac{1}{(q;q)_\infty} \sum_{n=1}^\infty (-1)^{n-1} q^{3n(n+1)/2} \\
	& \quad= \frac{q^3}{(q;q)_\infty} \sum_{n=0}^\infty (-1)^{n} q^{3n^2/2 + 9n/2} \\
	& \quad= \frac{q^3}{(q;q)_\infty} \lim_{z\to 0} \sum_{n=0}^\infty \frac{(q^6/z;q^3)_n}{(z;q^3)_n}z^n\\
	& \quad= \frac{q^3}{(q;q)_\infty} \lim_{z\to 0} \frac{(z^2/q^6,q^6;q^3)_\infty}{(z;q^3)^2_\infty} 
	\sum_{n=0}^\infty \frac{(q^n/z,q^6/z;q^3)_n}{(q^3,q^6;q^3)_n}\left(\frac{z^2}{q^6} \right)^n
	\tag{By Heine's transformation \eqref{eq:HT}}\\  
	& \quad= \frac{q^3(q^6;q^3)_\infty}{(q;q)_\infty} \lim_{z\to 0} \sum_{n=0}^\infty \frac{(-1)^n q^{3n(n+1)/2}(q^6/z;q^3)_n}{(q^3,q^6;q^3)_n}z^n\\ 
	& \quad= \frac{q^3(q^6;q^3)_\infty}{(q;q)_\infty} \sum_{n=0}^\infty \frac{ q^{3n(n+2)}}{(q^3,q^6;q^3)_n}\\
	& \quad= \frac{(q^3;q^3)_\infty}{(q;q)_\infty} \sum_{n=0}^\infty \frac{ q^{3(n+1)^2}}{(q^3;q^3)_n(q^3;q^3)_{n+1}}.
	\end{align*}
\end{proof}

Relating to Theorem \ref{TH:4.1}, we remark that
\begin{align}\label{Eq:9}
\sum_{n=0}^\infty D(n) q^n =\frac{1}{(q;q)_\infty} \sum_{n=1}^\infty (-1)^{n-1} q^{n(n+1)/2} = \sum_{n=0}^\infty \frac{q^{(n+1)^2}}{(q;q)_n(q;q)_{n+1}}
\end{align}
is the generating function for the partitions with positive crank. 
It is an easy exercise to deduce Theorem \ref{TH:4.1} from \eqref{Eq:9} and vice versa.
Connections between Garden of Eden partitions and partitions with positive crank can be easily derived considering Theorem \ref{TH:4.1}.

\begin{corollary}
	For $n\geqslant 0$,
	\begin{align*}
	ge(n) = \sum_{k=0}^{\lfloor n/3 \rfloor} D(j) p_3(n-3j),
	\end{align*}
	where $p_3(n)$ counts partitions of $n$ in which no parts are multiples of $3$.
\end{corollary}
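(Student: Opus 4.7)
The plan is to read off both factors on the right side of Theorem \ref{TH:4.1} as already-known generating functions, and then extract coefficients from the resulting product.

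First I would recall that the generating function for $p_3(n)$, the number of partitions of $n$ with no part divisible by $3$, is exactly
\begin{align*}
\sum_{n=0}^\infty p_3(n)\, q^n = \frac{1}{(q,q^2;q^3)_\infty},
\end{align*}
since the denominator contains factors $(1-q^j)^{-1}$ for all positive integers $j \not\equiv 0 \pmod 3$.

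Next, I would apply the substitution $q \mapsto q^3$ in the identity \eqref{Eq:9} for partitions with positive crank. This yields
\begin{align*}
\sum_{n=0}^\infty D(n)\, q^{3n} = \sum_{n=0}^\infty \frac{q^{3(n+1)^2}}{(q^3;q^3)_n(q^3;q^3)_{n+1}},
\end{align*}
which is precisely the series appearing in the statement of Theorem \ref{TH:4.1}. Substituting both of these identifications into Theorem \ref{TH:4.1} and using \eqref{eq:4.1} gives
\begin{align*}
\sum_{n=0}^\infty ge(n)\, q^n = \left( \sum_{n=0}^\infty p_3(n)\, q^n \right) \left( \sum_{j=0}^\infty D(j)\, q^{3j} \right).
\end{align*}

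Finally, I would multiply the two series on the right using the Cauchy product and equate coefficients of $q^n$ on both sides. Only terms with $3j \leqslant n$ contribute, so the inner sum runs from $j=0$ to $\lfloor n/3 \rfloor$, producing exactly
\begin{align*}
ge(n) = \sum_{j=0}^{\lfloor n/3 \rfloor} D(j)\, p_3(n-3j),
\end{align*}
which is the claimed identity. There is no real obstacle here: once Theorem \ref{TH:4.1} and the rewriting \eqref{Eq:9} are in hand, the corollary is essentially a direct coefficient comparison in a product of generating functions.
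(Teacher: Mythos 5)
Your proposal is correct and is exactly the argument the paper intends: the paper leaves this corollary without an explicit proof, but it follows just as you describe by recognizing $1/(q,q^2;q^3)_\infty$ as the generating function for $p_3(n)$, the dilated series from \eqref{Eq:9} as $\sum_j D(j)q^{3j}$, and taking the Cauchy product in Theorem \ref{TH:4.1} together with \eqref{eq:4.1}. (You also correctly use $j$ as the summation index, silently fixing the $k$/$j$ typo in the paper's statement.)
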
	

We have the following truncated form of Theorem \ref{TH:4.1}.

\begin{theorem}\label{TH:4.2}
	For $|q|<1$, $k\geqslant 1$,
	\begin{align*}
	& \frac{1}{(q;q)_\infty} \sum_{n=1}^k (-1)^{n-1} q^{3n(n+1)/2} = \frac{1}{(q,q^2;q^3)_\infty} \sum_{n=0}^\infty \frac{q^{3(n+1)^2}}{(q^3;q^3)_n(q^3;q^3)_{n+1}}\\
	& \qquad\qquad\qquad\qquad + (-1)^{k-1} \frac{q^{3(k+1)(k+2)/2}}{(q,q^2;q^3)_\infty} \sum_{n=0}^\infty \frac{q^{3n(n+k+2)}}{(q^3;q^3)_n(q^{3};q^3)_{n+k+1}}.
	\end{align*}
\end{theorem}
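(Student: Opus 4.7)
The proof will follow the same template already used for Theorems \ref{th:2} and \ref{TH:4.1}: split the partial sum as the full series minus a tail, apply Theorem \ref{TH:4.1} to the full series, and evaluate the tail by the Heine transformation \eqref{eq:HT} after a shift of summation index. Concretely, I would start from
\begin{align*}
\frac{1}{(q;q)_\infty}\sum_{n=1}^{k}(-1)^{n-1}q^{3n(n+1)/2}
= \frac{1}{(q;q)_\infty}\sum_{n=1}^{\infty}(-1)^{n-1}q^{3n(n+1)/2}
- \frac{1}{(q;q)_\infty}\sum_{n=k+1}^{\infty}(-1)^{n-1}q^{3n(n+1)/2},
\end{align*}
replace the first sum by Theorem \ref{TH:4.1}, and concentrate on the tail.

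For the tail, shift $n\mapsto n+k+1$ so that
\begin{align*}
\sum_{n=k+1}^\infty(-1)^{n-1}q^{3n(n+1)/2}
= (-1)^{k} q^{3(k+1)(k+2)/2}\sum_{n=0}^\infty (-1)^n q^{3n^2/2+(6k+9)n/2}.
\end{align*}
(Verifying the exponent is a small algebra exercise: $(n+k+1)(n+k+2)=n^2+(2k+3)n+(k+1)(k+2)$.) Then, exactly as in Sections \ref{S2} and the proof of Theorem \ref{TH:4.1}, rewrite this base-level $\theta$-type sum as a limit of a ${}_2\phi_1$ series:
\begin{align*}
\sum_{n=0}^\infty (-1)^n q^{3n^2/2+(6k+9)n/2}
= \lim_{z\to 0}\sum_{n=0}^\infty\frac{(q^{3k+6}/z;q^3)_n}{(z;q^3)_n}z^n
= \lim_{z\to 0}{_2}\phi_1\!\bigg(\begin{matrix}q^{3},\,q^{3k+6}/z\\ z\end{matrix};q^3,z\bigg).
\end{align*}

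Next, apply Heine's transformation \eqref{eq:HT} with $a=q^3$, $b=q^{3k+6}/z$, $c=z$; this converts the ${}_2\phi_1$ into
\begin{align*}
\frac{(z^2/q^{3k+6},\,q^{3k+6};q^3)_\infty}{(z;q^3)^2_\infty}\,
{_2}\phi_1\!\bigg(\begin{matrix}q^{3k+9}/z,\,q^{3k+6}/z\\ q^{3k+6}\end{matrix};q^3,z^2/q^{3k+6}\bigg),
\end{align*}
and letting $z\to 0$ produces the sum $(q^{3k+6};q^3)_\infty\sum_{n\geqslant 0} q^{3n(n+k+2)}/((q^3;q^3)_n(q^{3k+6};q^3)_n)$ after collecting the leading-order contribution of $(q^{3k+6}/z;q^3)_n z^n$ with $(z^2/q^{3k+6})^n$. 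Finally, use the identities
\begin{align*}
(q^{3k+6};q^3)_n(q^3;q^3)_{k+1}=(q^3;q^3)_{n+k+1},
\qquad
\frac{(q^3;q^3)_\infty}{(q;q)_\infty}=\frac{1}{(q,q^2;q^3)_\infty},
\end{align*}
to recognise the result as $(q,q^2;q^3)_\infty^{-1}\sum_{n\geqslant 0}q^{3n(n+k+2)}/((q^3;q^3)_n(q^3;q^3)_{n+k+1})$, with the correct overall factor $(-1)^{k-1}q^{3(k+1)(k+2)/2}$ inherited from the initial shift (the extra minus sign comes from $-\sum_{n=k+1}^\infty$).

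The main obstacle is really just bookkeeping: matching the exponent of $q$ when choosing the parameter $q^{3k+6}/z$ inside the ${}_2\phi_1$ (so that the limit produces $q^{3n(n+k+2)}$), and then collapsing the resulting infinite product $(q^{3k+6};q^3)_\infty/(q;q)_\infty$ into $1/(q,q^2;q^3)_\infty$ modulo a shifted $(q^3;q^3)$ factor. No new analytic tool beyond Heine's transformation is needed, and the argument is essentially a $q^3$-analogue of the computation already carried out in Section \ref{S2}.
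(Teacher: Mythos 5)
Your proposal is correct and follows essentially the same route as the paper: split off the tail of the series in \eqref{eq:4.1}, replace the full series by Theorem \ref{TH:4.1}, shift the summation index to pull out $(-1)^k q^{3(k+1)(k+2)/2}$, realise the remaining theta sum as $\lim_{z\to 0}{_2}\phi_1(q^3,\,q^{3(k+2)}/z;\,z;\,q^3,z)$, and apply Heine's transformation \eqref{eq:HT} before simplifying $(q^{3k+6};q^3)_\infty/(q;q)_\infty$ via $(q^3;q^3)_{n+k+1}$ and $(q,q^2;q^3)_\infty$. The only cosmetic difference is that you shift by $k+1$ in one step where the paper first reindexes by one and then by $k$; the parameter $q^{3k+6}/z$ and all subsequent computations coincide with the paper's.
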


\begin{proof}
	The identity \eqref{eq:4.1} can be written as:
	$$\frac{1}{(q;q)_\infty} \sum_{n=1}^k (-1)^{n-1} q^{3n(n+1)/2} = 
	\sum_{n=0}^\infty ge(n) q^n - \frac{q^3}{(q;q)_\infty} \sum_{n=k}^\infty (-1)^{n} q^{3n(n+3)/2}.$$
	We have
	\allowdisplaybreaks{
		\begin{align*}
		& \frac{q^3}{(q;q)_\infty} \sum_{n=k}^\infty (-1)^{n} q^{3n(n+3)/2}\\
		& \quad = (-1)^k \frac{q^{3(k+1)(k+2)/2}}{(q;q)_\infty} \sum_{n=0}^\infty (-1)^n q^{3n(2k+3)/2+3n^2/2}\\ 
		& \quad = (-1)^k \frac{q^{3(k+1)(k+2)/2}}{(q;q)_\infty} \lim_{z\to 0} \sum_{n=0}^\infty \frac{(q^{3(k+2)/2}/z;q^{3})_n}{(z;q^3)_n}z^n\\
		& \quad = (-1)^k \frac{q^{3(k+1)(k+2)/2}}{(q;q)_\infty} \\
		& \quad\quad \times \lim_{z\to 0} \frac{(z^2/q^{3(k+2)},q^{3(k+2)};q^3)_\infty}{(z;q^3)^2_\infty}
		\sum_{n=0}^\infty \frac{(q^{3(k+3)}/z,q^{3(k+2)}/z;q^3)_n}{(q^3,q^{3(k+2)};q^3)_n} \left( \frac{z^2}{q^{3(k+2)}}\right)^n
		\tag{By Heine's transformation \eqref{eq:HT}}\\ 
		& \quad = (-1)^k q^{3(k+1)(k+2)/2} \frac{(q^{3(k+2)};q^3)_\infty}{(q;q)_\infty}
		\lim_{z\to 0} \sum_{n=0}^\infty (-1)^n \frac{q^{3n(n+1)/2}(q^{3(k+2)}/z;q^3)_n}{(q^3,q^{3(k+2)};q^3)_n} z^n\\
		& \quad = (-1)^k q^{3(k+1)(k+2)/2} \frac{(q^{3(k+2)};q^3)_\infty}{(q;q)_\infty} \sum_{n=0}^\infty \frac{q^{3n(n+k+2)}}{(q^3,q^{3(k+2)};q)_n}\\  
		& \quad = (-1)^k q^{3(k+1)(k+2)/2} \frac{(q^3;q^3)_\infty}{(q;q)_\infty} \sum_{n=0}^\infty \frac{q^{3n(n+k+2)}}{(q^3;q^3)_n(q^{3};q^3)_{n+k+1}}.
		\end{align*}
	}
	The proof follows easily considering Theorem \ref{TH:4.1}.
\end{proof}

On the one hand, as a consequence of Theorem \ref{TH:4.2}, we remark a new infinite family of linear inequalities for the partition function $p(n)$. 

\begin{corollary}\label{Th:4.1}
	For $n\geqslant 0$, $k\geqslant 1$,
	$$ (-1)^{k-1} \left( \sum_{j=1}^{k} (-1)^{j-1} p\big(n-3j(j+1)/2\big)  - ge(n) \right) \geqslant 0,$$
	with strict inequality if $n\geqslant 3(k+1)(k+2)/2$. For example,
	\begin{align*}
	& p(n-3) \geqslant ge(n),\\
	& p(n-3)-p(n-9) \leqslant ge(n),\\
	& p(n-3)-p(n-9)+p(n-18) \geqslant ge(n), \text{ and}\\ 
	& p(n-3)-p(n-9)+p(n-18)-p(n-30) \leqslant ge(n).
	\end{align*}
\end{corollary}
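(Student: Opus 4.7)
The plan is to deduce this corollary directly from Theorem \ref{TH:4.2} by rearranging the truncated identity and comparing coefficients.

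First, I would use Theorem \ref{TH:4.1} to substitute $\sum_{n=0}^\infty ge(n)q^n$ for the first term on the right-hand side of Theorem \ref{TH:4.2}. This yields
\begin{equation*}
\frac{1}{(q;q)_\infty} \sum_{j=1}^{k} (-1)^{j-1} q^{3j(j+1)/2} - \sum_{n=0}^\infty ge(n)\,q^n = (-1)^{k-1} \frac{q^{3(k+1)(k+2)/2}}{(q,q^2;q^3)_\infty} \sum_{n=0}^\infty \frac{q^{3n(n+k+2)}}{(q^3;q^3)_n(q^3;q^3)_{n+k+1}}.
\end{equation*}
Multiplying by $(-1)^{k-1}$ gives
\begin{equation*}
(-1)^{k-1}\Bigl(\tfrac{1}{(q;q)_\infty} \sum_{j=1}^{k} (-1)^{j-1} q^{3j(j+1)/2} - \sum_{n=0}^\infty ge(n)\,q^n\Bigr) = \frac{q^{3(k+1)(k+2)/2}}{(q,q^2;q^3)_\infty} \sum_{n=0}^\infty \frac{q^{3n(n+k+2)}}{(q^3;q^3)_n(q^3;q^3)_{n+k+1}}.
\end{equation*}

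Next, I would expand the left-hand side using $1/(q;q)_\infty = \sum_{n\geqslant 0} p(n)q^n$ and the Cauchy product, obtaining
\begin{equation*}
\text{LHS} = (-1)^{k-1} \sum_{n=0}^\infty \Bigl(\sum_{j=1}^{k}(-1)^{j-1} p\bigl(n - 3j(j+1)/2\bigr) - ge(n)\Bigr) q^n.
\end{equation*}
The claimed inequality is precisely the statement that every coefficient on the left is non-negative. Thus it suffices to show that the right-hand side has non-negative coefficients.

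This is immediate: the factor $1/(q,q^2;q^3)_\infty$ is a generating function (for partitions into parts not divisible by $3$), hence has non-negative coefficients; the inner double sum $\sum_{n\geqslant 0} q^{3n(n+k+2)}/\bigl((q^3;q^3)_n(q^3;q^3)_{n+k+1}\bigr)$ visibly has non-negative coefficients term-by-term; and these are multiplied by the monomial $q^{3(k+1)(k+2)/2}$. For strictness when $n\geqslant 3(k+1)(k+2)/2$, I would note that the $j=0$ term of the inner sum contributes a constant $1$, so after multiplying by $1/(q,q^2;q^3)_\infty$ (whose coefficient of $q^m$ is at least $1$ for every $m\geqslant 0$, since one can take the partition of $m$ into $m$ ones), every coefficient of $q^n$ in the right-hand side for $n\geqslant 3(k+1)(k+2)/2$ is at least $1$. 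There is no real obstacle here; the entire argument is a direct consequence of Theorem \ref{TH:4.2} combined with the manifest positivity of the $q$-series factors on the right.
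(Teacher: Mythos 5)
Your argument is correct and is exactly the (implicit) route the paper takes: the corollary is stated as an immediate consequence of Theorem \ref{TH:4.2} via the identification of the first right-hand term with $\sum ge(n)q^n$ through Theorem \ref{TH:4.1} and \eqref{eq:4.1}, followed by coefficient extraction and the manifest non-negativity of the remaining series. One tiny imprecision: the $n=0$ term of the inner sum is $1/(q^3;q^3)_{k+1}$ rather than the constant $1$, but since its constant term is $1$ and all its coefficients are non-negative, your strictness argument goes through unchanged.
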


On the other hand, by Theorem \ref{TH:4.2}, we deduce the following truncated version of \eqref{Eq:9}.

\begin{corollary}\label{C:4.5}
	For $|q|<1$, $k\geqslant 1$,
	\begin{align*}
	& \frac{1}{(q;q)_\infty} \sum_{n=1}^k (-1)^{n-1} q^{n(n+1)/2} \\
	& = \sum_{n=0}^\infty \frac{q^{(n+1)^2}}{(q;q)_n(q;q)_{n+1}} + (-1)^{k-1} q^{(k+1)(k+2)/2} \sum_{n=0}^\infty \frac{q^{n(n+k+2)}}{(q;q)_n(q;q)_{n+k+1}}.
	\end{align*}
\end{corollary}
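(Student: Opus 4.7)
The plan is to deduce Corollary~\ref{C:4.5} directly from Theorem~\ref{TH:4.2} by a base-change trick, avoiding a fresh application of Heine's transformation. The key observation is that every exponent of $q$ appearing in Theorem~\ref{TH:4.2}, after one clears the prefactor $1/(q,q^2;q^3)_\infty$, is a multiple of~$3$. Meanwhile the single factor $1/(q;q)_\infty$ on the left can be rebuilt as a product involving $(q,q^2;q^3)_\infty$ and $(q^3;q^3)_\infty$.

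First I would multiply both sides of Theorem~\ref{TH:4.2} by $(q,q^2;q^3)_\infty$. This removes the prefactor on the right-hand side, while on the left-hand side it produces
$$\frac{(q,q^2;q^3)_\infty}{(q;q)_\infty}=\frac{(q,q^2;q^3)_\infty}{(q,q^2,q^3;q^3)_\infty}=\frac{1}{(q^3;q^3)_\infty}.$$
Theorem~\ref{TH:4.2} thus becomes
\begin{align*}
\frac{1}{(q^3;q^3)_\infty}\sum_{n=1}^{k}(-1)^{n-1}q^{3n(n+1)/2}
&=\sum_{n=0}^{\infty}\frac{q^{3(n+1)^{2}}}{(q^3;q^3)_n(q^3;q^3)_{n+1}}\\
&\quad+(-1)^{k-1}q^{3(k+1)(k+2)/2}\sum_{n=0}^{\infty}\frac{q^{3n(n+k+2)}}{(q^3;q^3)_n(q^3;q^3)_{n+k+1}},
\end{align*}
which is a genuine identity of formal power series in $q^{3}$, since the exponents $3n(n+1)/2$, $3(n+1)^{2}$, $3(k+1)(k+2)/2$ and $3n(n+k+2)$ are all integer multiples of~$3$ (note $n(n+1)$ and $(k+1)(k+2)$ are always even).

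Finally, I would substitute $q^{3}\mapsto q$ throughout; since every exponent of $q$ is a multiple of~$3$ and the Pochhammer symbols on both sides are based at $q^{3}$, this substitution is legitimate and yields precisely the identity claimed in Corollary~\ref{C:4.5}. The proof is therefore purely a rescaling of Theorem~\ref{TH:4.2}; the only point requiring verification is the cancellation $(q,q^2;q^3)_\infty/(q;q)_\infty=1/(q^3;q^3)_\infty$ and the observation of $q^{3}$-homogeneity, so no genuine obstacle is anticipated.
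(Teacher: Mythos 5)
Your proposal is correct and is essentially the argument the paper intends: the paper simply asserts that Corollary~\ref{C:4.5} follows from Theorem~\ref{TH:4.2}, and the intended deduction is exactly your rescaling --- clear the prefactor using $(q,q^2;q^3)_\infty/(q;q)_\infty = 1/(q^3;q^3)_\infty$, observe that the resulting identity lives entirely in powers of $q^3$, and replace $q^3$ by $q$.
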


This result allows us to deduce the following infinite family of linear inequalities for the partition function $p(n)$. 

\begin{corollary}\label{C:4.6}
	For $n\geqslant 0$, $k\geqslant 1$,
	$$ (-1)^{k-1} \left( \sum_{j=1}^{k} (-1)^{j-1} p\big(n-j(j+1)/2\big)  - D(n) \right) \geqslant 0,$$
	with strict inequality if $n\geqslant (k+1)(k+2)/2$. For example,
	\begin{align*}
	& p(n-1) \geqslant D(n),\\
	& p(n-1)-p(n-3) \leqslant D(n),\\
	& p(n-1)-p(n-3)+p(n-6) \geqslant D(n), \text{ and}\\ 
	& p(n-1)-p(n-3)+p(n-6)-p(n-10) \leqslant D(n).
	\end{align*}
\end{corollary}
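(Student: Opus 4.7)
The plan is to read Corollary \ref{C:4.5} as a truncated identity for the generating function of partitions with positive crank. By \eqref{Eq:9}, the first sum on the right-hand side of Corollary \ref{C:4.5} equals $\sum_{n=0}^\infty D(n)q^n$. Subtracting this series from both sides of Corollary \ref{C:4.5} and multiplying through by $(-1)^{k-1}$ gives
\begin{equation*}
(-1)^{k-1}\left(\frac{1}{(q;q)_\infty}\sum_{j=1}^{k}(-1)^{j-1}q^{j(j+1)/2}-\sum_{n=0}^\infty D(n)q^n\right) = q^{(k+1)(k+2)/2}\sum_{m=0}^\infty \frac{q^{m(m+k+2)}}{(q;q)_m(q;q)_{m+k+1}}.
\end{equation*}
Extracting the coefficient of $q^n$ from both sides then yields
\begin{equation*}
(-1)^{k-1}\left(\sum_{j=1}^{k}(-1)^{j-1}p\bigl(n-j(j+1)/2\bigr)-D(n)\right) = [q^n]\,q^{(k+1)(k+2)/2}\sum_{m=0}^\infty \frac{q^{m(m+k+2)}}{(q;q)_m(q;q)_{m+k+1}}.
\end{equation*}

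The weak inequality will follow the moment I verify that the right-hand side is a power series with non-negative coefficients. This is immediate: for each $m\geqslant 0$, the factor $1/\bigl((q;q)_m(q;q)_{m+k+1}\bigr)$ is a product of two partition generating functions (with bounded parts) and so expands with non-negative integer coefficients, and multiplication by the monomial $q^{(k+1)(k+2)/2+m(m+k+2)}$ preserves this property. Summing over $m$ therefore gives a series with non-negative coefficients, which is exactly the claimed inequality.

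For the strict inequality when $n\geqslant(k+1)(k+2)/2$, I would isolate the $m=0$ contribution to the sum, which is $q^{(k+1)(k+2)/2}/(q;q)_{k+1}$. Since $1/(q;q)_{k+1}$ is the generating function for partitions into parts of size at most $k+1$, its coefficient of $q^{n-(k+1)(k+2)/2}$ is strictly positive whenever $n\geqslant(k+1)(k+2)/2$; the remaining $m\geqslant 1$ terms contribute additional non-negative amounts. The only point needing a bit of care is the sign bookkeeping introduced by the alternating $(-1)^{k-1}$, but otherwise the argument is a routine coefficient extraction parallel to the proof of Corollary \ref{Th:4.1}, and I do not anticipate a substantive obstacle.
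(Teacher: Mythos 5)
Your proposal is correct and follows exactly the route the paper intends: it deduces the inequality from Corollary \ref{C:4.5} by identifying the first series on its right-hand side with $\sum_{n\geqslant 0}D(n)q^n$ via \eqref{Eq:9} and then extracting coefficients, with positivity of $1/\bigl((q;q)_m(q;q)_{m+k+1}\bigr)$ giving the weak inequality and the $m=0$ term giving strictness for $n\geqslant (k+1)(k+2)/2$. This is the same argument, carried out in the same way as the paper's analogous deduction of Corollary \ref{Th:4.1} from Theorem \ref{TH:4.2}.
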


\section{Concluding remarks}

New infinite families of linear inequalities for the partition function $p(n)$ have been introduced in this paper considering
two theta identities involving the generating functions for partitions with non-negative rank and non-negative crank.
Inspired by these results,  in Section \ref{S4} we considered the partitions with rank $\leqslant -2$ (Garden of Eden partitions)
and obtained another infinite families of linear inequalities for $p(n)$.

Theorems \ref{th:1} and \ref{th:2} allow us to derive the following theta identity.

\begin{corollary}\label{C:5.1}
	For $|q|<1$ and $k\geqslant 1$, there holds
	\begin{align*}
	&\sum_{n=1}^\infty \frac{q^{{k\choose 2}+(k+1)n}}{(q;q)_n}
	\begin{bmatrix}
	n-1\\k-1
	\end{bmatrix}\\
	& = \frac{q^{k(3k+1)/2}}{(q,q^3;q^3)_\infty} \sum_{n=0}^\infty \frac{q^{n(3n+3k+2)}}{(q^3;q^3)_n(q^2;q^3)_{n+k}}
	- \frac{q^{k(3k+5)/2+1}}{(q^2,q^3;q^3)_\infty} \sum_{n=0}^\infty \frac{q^{n(3n+3k+4)}}{(q^3;q^3)_n(q;q^3)_{n+k+1}}.
	\end{align*}
\end{corollary}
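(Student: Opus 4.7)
The plan is to derive Corollary \ref{C:5.1} by direct subtraction, using the observation that the left-hand side of Theorem \ref{th:1} factors as the difference of the two left-hand sides appearing in Theorem \ref{th:2}. Specifically, since
$$j(3j+1)/2 + (2j+1) = j(3j+5)/2 + 1,$$
the expansion
$$\sum_{j=0}^{k-1} (-1)^j q^{j(3j+1)/2}(1-q^{2j+1}) = \sum_{j=0}^{k-1} (-1)^j q^{j(3j+1)/2} - \sum_{j=0}^{k-1} (-1)^j q^{j(3j+5)/2+1}$$
allows us to read off the left-hand side of Theorem \ref{th:1} from the two identities in Theorem \ref{th:2}.

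The steps, in order, would be as follows. First, I subtract the second identity of Theorem \ref{th:2} from the first; the $1+\sum_{j=1}^\infty q^j\begin{bmatrix}2j-1\\j-1\end{bmatrix}$ portions on the right telescope down to $1$, leaving
\begin{align*}
\frac{1}{(q;q)_\infty}\sum_{j=0}^{k-1} (-1)^j q^{j(3j+1)/2}(1-q^{2j+1})
&= 1 + (-1)^{k-1}\Biggl[\frac{q^{k(3k+1)/2}}{(q,q^3;q^3)_\infty}\sum_{n=0}^\infty \frac{q^{n(3n+3k+2)}}{(q^3;q^3)_n(q^2;q^3)_{n+k}} \\
&\qquad\qquad\quad - \frac{q^{k(3k+5)/2+1}}{(q^2,q^3;q^3)_\infty}\sum_{n=0}^\infty \frac{q^{n(3n+3k+4)}}{(q^3;q^3)_n(q;q^3)_{n+k+1}}\Biggr].
\end{align*}
Next, I invoke Theorem \ref{th:1} to rewrite the left-hand side as
$$1+(-1)^{k-1}\sum_{n=1}^\infty \frac{q^{\binom{k}{2}+(k+1)n}}{(q;q)_n}\begin{bmatrix}n-1\\k-1\end{bmatrix}.$$
Cancelling the constant $1$ and the common factor $(-1)^{k-1}$ from both sides yields exactly the identity claimed in Corollary \ref{C:5.1}.

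There is no serious obstacle here: the argument is algebraic combination of two results that have already been established, and the only things to check are the exponent arithmetic $j(3j+1)/2 + 2j + 1 = j(3j+5)/2 + 1$ and the cancellation of the partial theta-like remainders $1+\sum_{j=1}^\infty q^j\begin{bmatrix}2j-1\\j-1\end{bmatrix}$ against $\sum_{j=1}^\infty q^j\begin{bmatrix}2j-1\\j-1\end{bmatrix}$. Accordingly, the corollary is best presented not as a new theorem but as a direct consequence of combining Theorems \ref{th:1} and \ref{th:2}.
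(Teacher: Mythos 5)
Your proposal is correct and is exactly the derivation the paper intends: the corollary is stated as a consequence of Theorems \ref{th:1} and \ref{th:2}, obtained by subtracting the two identities of Theorem \ref{th:2} (using $q^{j(3j+1)/2}-q^{j(3j+5)/2+1}=q^{j(3j+1)/2}(1-q^{2j+1})$), matching the result against Theorem \ref{th:1}, and cancelling the common terms. The exponent arithmetic and the cancellation of the $\sum_{j\geqslant 1} q^j\begin{bmatrix}2j-1\\j-1\end{bmatrix}$ pieces both check out.
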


A similar theta identity can be derived if we consider another truncated form of Euler's pentagonal number theorem given by
D. Shanks \cite{Shanks} in $1951$:
\begin{align}\label{eq:S}
1+\sum_{n=1}^{k} (-1)^n \left(q^{n(3n+1)/2} + q^{n(3n-1)/2} \right) 
= \sum_{n=0}^k (-1)^n \frac{q^{{n+1\choose 2}+kn}(q;q)_k}{(q;q)_n}.
\end{align}

\begin{corollary}\label{C:5.2}
	For $|q|<1$ and $k\geqslant 0$, there holds
	\begin{align*}
	& \frac{(-1)^k}{(q;q)_\infty}\sum_{n=0}^k (-1)^n \frac{q^{{n+1\choose 2}+kn}(q;q)_k}{(q;q)_n}-(-1)^k\\
	& =  \frac{q^{k(3k+7)/2+2}}{(q,q^3;q^3)_\infty} \sum_{n=0}^\infty \frac{q^{n(3n+3k+5)}}{(q^3;q^3)_n(q^2;q^3)_{n+k+1}}
	+ \frac{q^{k(3k+5)/2+1}}{(q^2,q^3;q^3)_\infty} \sum_{n=0}^\infty \frac{q^{n(3n+3k+4)}}{(q^3;q^3)_n(q;q^3)_{n+k+1}}.
	\end{align*}	
\end{corollary}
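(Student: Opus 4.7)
The plan is to derive Corollary \ref{C:5.2} by combining two carefully chosen instances of Theorem \ref{th:2} and then rewriting the resulting left-hand side using Shanks's truncated pentagonal identity \eqref{eq:S}. First, I would apply the first identity of Theorem \ref{th:2} with $k$ replaced by $k+1$. Here one checks the exponent identities
\[
(k+1)\bigl(3(k+1)+1\bigr)/2 = k(3k+7)/2 + 2,\qquad n\bigl(3n+3(k+1)+2\bigr)=n(3n+3k+5),
\]
and notes $(q^2;q^3)_{(k+1)+n}=(q^2;q^3)_{n+k+1}$, so the tail is exactly the first theta-product sum on the RHS of Corollary \ref{C:5.2}, weighted by $(-1)^{(k+1)-1}=(-1)^k$. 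Then I would take the second identity of Theorem \ref{th:2} as stated; its tail matches the second theta-product sum on the RHS of Corollary \ref{C:5.2}, weighted by $(-1)^{k-1}$.

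Subtracting the second from the first causes the common series $\sum_{j=1}^\infty q^j\begin{bmatrix}2j-1\\j-1\end{bmatrix}$ to cancel, leaving
\[
\frac{1}{(q;q)_\infty}\!\left[\sum_{j=0}^{k}(-1)^{j}q^{j(3j+1)/2}-\sum_{j=0}^{k-1}(-1)^{j}q^{j(3j+5)/2+1}\right]-1
\]
equal to $(-1)^{k}$ times the sum of the two theta-product expressions on the RHS of Corollary \ref{C:5.2}, where the sign collects as $(-1)^{k}+(-(-1)^{k-1})=2(-1)^{k}$ — more precisely $-(-1)^{k-1}=(-1)^k$, so both tails carry the same $(-1)^k$. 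Multiplying through by $(-1)^k$ then produces the RHS of Corollary \ref{C:5.2}, modulo identifying the bracketed expression with the Shanks sum.

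To conclude, I would invoke \eqref{eq:S}. Splitting the Shanks right-hand side as
\[
1+\sum_{n=1}^{k}(-1)^{n}q^{n(3n+1)/2}+\sum_{n=1}^{k}(-1)^{n}q^{n(3n-1)/2}
\]
and applying the substitution $n=m+1$ in the last sum, together with the exponent identity $(m+1)\bigl(3(m+1)-1\bigr)/2=m(3m+5)/2+1$, converts it to $-\sum_{m=0}^{k-1}(-1)^{m}q^{m(3m+5)/2+1}$. Absorbing the leading $1$ into the first sum yields exactly the bracketed expression above, so \eqref{eq:S} lets me replace that bracket by $\sum_{n=0}^{k}(-1)^{n}q^{\binom{n+1}{2}+kn}(q;q)_k/(q;q)_n$, completing the identification.

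The whole argument is essentially bookkeeping: the only places requiring care are the two exponent identities $(k+1)(3k+4)/2=k(3k+7)/2+2$ and $(m+1)(3m+2)/2=m(3m+5)/2+1$, together with the sign accounting in combining $(-1)^k$ tails. I do not anticipate any genuine obstacle once the correct specialization $k\mapsto k+1$ in the first identity of Theorem \ref{th:2} is spotted.
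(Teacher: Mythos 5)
Your derivation is correct and follows exactly the route the paper intends: Corollary \ref{C:5.2} is obtained by combining Shanks's identity \eqref{eq:S} with the two tails of Theorem \ref{th:2}, taking $k\mapsto k+1$ in the first identity so that its tail becomes the $q^{k(3k+7)/2+2}$ series, just as Corollary \ref{C:5.1} combines Theorems \ref{th:1} and \ref{th:2}. The only point worth noting is the boundary case $k=0$: there your argument invokes the second identity of Theorem \ref{th:2} at $k=0$, which is outside its stated range $k\geqslant 1$ but is immediate from \eqref{eq:1} and the pentagonal number theorem (the ``tail'' is then the full sum), so nothing essential is missing.
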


The Shanks identity \eqref{eq:S} and Corollary \ref{C:5.2} allow us to obtain the following infinite family of linear inequalities: For $n>0$, $k\geqslant 1$,
\begin{equation*}
(-1)^{k}\left( p(n)+ \sum_{j=1}^{k} (-1)^j 
\Big( p\big(n-j(3j+1)/2\big) - p\big(n-j(3j-1)/2\big) \Big) \right) \geqslant 0, 
\end{equation*}
with strict inequality if $n> k(3k+5)/2$. We remark that this inequality is weaker than the inequality \eqref{eq:1.1}. However, a partition theoretic interpretation for it would be very interesting.

Relevant to Theorem \ref{th:2} and Corollaries \ref{C:5.1} and \ref{C:5.2}, it would be very appealing to have combinatorial interpretations for 
$$
\frac{q^{k(3k+1)/2}}{(q,q^3;q^3)_\infty} \sum_{n=0}^\infty \frac{q^{n(3n+3k+2)}}{(q^3;q^3)_n(q^2;q^3)_{n+k}},
$$
$$
\frac{q^{k(3k+5)/2+1}}{(q^2,q^3;q^3)_\infty} \sum_{n=0}^\infty \frac{q^{n(3n+3k+4)}}{(q^3;q^3)_n(q;q^3)_{n+k+1}},
$$
and
$$
\frac{q^{k(3k+7)/2+2}}{(q,q^3;q^3)_\infty} \sum_{n=0}^\infty \frac{q^{n(3n+3k+5)}}{(q^3;q^3)_n(q^2;q^3)_{n+k+1}}.
$$
Finally, with regard to Theorems \ref{Th:3.2} and  \ref{Th:4.1}, partition theoretic interpretation for 
$$
q^{k(k+1)/2} \sum_{n=0}^\infty \frac{q^{n(n+k+1)}}{(q;q)_n(q;q)_{n+k}}
$$
and
$$\frac{q^{3(k+1)(k+2)/2}}{(q,q^2;q^3)_\infty} \sum_{n=0}^\infty \frac{q^{3n(n+k+2)}}{(q^3;q^3)_n(q^{3};q^3)_{n+k+1}}$$
would be very interesting.

\section*{Acknowledgements}
The author expresses his gratitude to Professor George E. Andrews for some helpful suggestions.

\end{document}